\newcommand{\arxiv}[1]{{\tt \href{http://arxiv.org/abs/#1}{arXiv:#1}}}
\newcommand{\old}[1]{}
\newtheorem{thm}{Theorem}[section]
\newtheorem{prop}[thm]{Proposition}
\newtheorem{lem}[thm]{Lemma}
\newtheorem{cor}[thm]{Corollary}
\theoremstyle{remark}
\numberwithin{counter}{section}
\theoremstyle{definition}
\newtheorem{defn}[thm]{Definition}
 \newtheorem*{problem*}{\protect\problemname}
\def\mod{\mbox{ mod }}
\def\pp{\mathbf{p}}
\def\xx{\mathbf{x}}
\def\yy{\mathbf{y}}
\def\00{\mathbf{0}}
\def\ord{\mathrm{ord}}
\def\Rec{\mathop{\mathrm{Rec}}}
\def\zero{\mathbf{0}}
\def\one{\mathbf{1}}
\def\N{\mathbb{N}}
\def\Z{\mathbb{Z}}
\newlength{\problemoffset}
\newcommand{\decision}[3]{
\medskip
{\textsc{#1}:}
\smallskip
\begin{list}{}{
\setlength{\leftmargin}{\problemoffset}
\setlength{\rightmargin}{\problemoffset}
\setlength{\parsep}{1pt}
\setlength{\itemsep}{2pt}
\setlength{\topsep}{\itemsep}
\setlength{\partopsep}{\itemsep}
}
\item
{\textbf{Given} #2}
\item
{\textbf{Decide} #3}
\end{list}
\medskip
}
\def\BState{\State\hskip-\ALG@thistlm} 
\begin{document}

\title{CoEulerian graphs}

\author{Matthew Farrell \and Lionel Levine}

\date{September 4, 2015}
\subjclass[2010]{05C05, 05C20, 05C45, 05C50, 68Q25}
\thanks{This research was supported by NSF grants \href{http://www.nsf.gov/awardsearch/showAward?AWD_ID=1243606}{DMS-1243606} and \href{http://www.nsf.gov/awardsearch/showAward?AWD_ID=1455272}{DMS-1455272} and a Sloan Fellowship.}
\address{Matthew Farrell, Department of Mathematics, Cornell University, Ithaca, NY 14850. msf235@cornell.edu}
\keywords{chip-firing, critical group, Eulerian digraph, Laplacian lattice, oriented spanning tree, period vector, Pham index, sandpile group}
\address{Lionel Levine, Department of Mathematics, Cornell University, Ithaca, NY 14850. \url{http://www.math.cornell.edu/~levine/}}

\maketitle

\begin{abstract} 
We suggest a measure of ``Eulerianness'' of a finite directed graph and define a class of ``coEulerian'' graphs.  These are the graphs whose Laplacian lattice is as large as possible.  As an application, we address a question in chip-firing posed by  Bj\"{o}rner, Lov\'{a}sz, and Shor in 1991, who asked for ``\emph{a characterization of those digraphs and initial chip configurations that guarantee finite termination.}''  Bj\"{o}rner and Lov\'{a}sz gave an exponential time algorithm in 1992.  We show that this can be improved to linear time if the graph is coEulerian, and that the problem is $\NP$-complete for general directed multigraphs. 
\end{abstract}

\section{Introduction}

In this paper $G=(V,E)$ will always denote a finite directed graph, with loops and multiple edges permitted.  We assume throughout that $G$ is strongly connected: for each $v,w \in V$ there are directed paths from $v$ to $w$ and from $w$ to $v$.
Trung Van Pham \cite{Trung} introduced the quantity
	\[ M_G = \gcd \{ \kappa(v) | v \in V\} \]
where $\kappa(v)$ is the number of spanning trees of $G$ oriented toward $v$.  We will see that $M_G$, which we will call the \textbf{Pham index} of the graph $G$, can be interpreted as a measure of ``Eulerianness''.   

A finite directed multigraph $G$ is called \textbf{Eulerian} if it has an Eulerlian tour (a closed path that traverses each directed edge exactly once). We are going to take the view that Eulerianness is an algebraic property of the \textbf{graph Laplacian} $\Delta$ acting on integer-valued functions $f \in \Z^V$ by
	\begin{equation} \label{e.thelaplacian} \Delta f (v) = d_v f(v) - \sum_{\mathrm{head}(e)=v} f(\mathrm{tail}(e)). \end{equation}
Here $d_v$ is the outdegree of vertex $v$.
The context is the following well-known equivalence, where $\one$ denotes the constant function $\one(v)=1$ for all $v \in V$.

\begin{samepage}
\begin{prop} \label{p.Eul}
The following are equivalent for a strongly connected directed multigraph $G=(V,E)$.
\begin{enumerate}
\item $\ker (\Delta : \Z^V \to \Z^V) = \Z \one$.
\item $M_G = \kappa(s)$ for all $s \in V$.
\item $G$ is Eulerian.
\end{enumerate}
\end{prop}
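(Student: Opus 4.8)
The plan is to show that all three conditions are equivalent to a single combinatorial statement: that the tree-number $\kappa(v)$ is independent of $v$. Throughout, write $\kappa = (\kappa(v))_{v\in V} \in \Z^V$ for the vector of tree numbers, and recall that the matrix of $\Delta$ is $D - A^{T}$, where $D$ is the diagonal out-degree matrix and $A$ the adjacency matrix.

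First I would record the linear-algebraic structure of $\ker(\Delta)$, which is the one substantive input. By the directed Matrix--Tree Theorem (equivalently, the Markov chain tree theorem for the walk that steps from $v$ to a uniformly chosen out-neighbor), the reduced determinant of $\Delta$ obtained by deleting the row and column indexed by $v$ equals $\kappa(v)$. Since $G$ is strongly connected, every $\kappa(v)$ is a strictly positive integer, so some principal minor of $\Delta$ is nonzero and $\Delta$ has rank at least $|V|-1$; as the columns of $\Delta$ sum to zero, i.e. $\one^{T}\Delta = 0$, the rank is exactly $|V|-1$ and $\ker_{\R}(\Delta)$ is one-dimensional. The same theory (or the deletion/contraction bijection sending a pair consisting of an in-tree toward $v$ and an out-edge at $v$ to a unicyclic subgraph) gives the balance relation $\outdeg(v)\,\kappa(v) = \sum_{\mathrm{head}(e)=v}\kappa(\mathrm{tail}(e))$, which is precisely $\Delta\kappa = 0$. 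Hence $\ker_{\R}(\Delta) = \R\kappa$ with $\kappa > 0$ entrywise.

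Next I would compute the integer kernel. Because $\kappa \in \Z^{V}$ has $\gcd\{\kappa(v)\} = M_G$, the integer points of the line $\R\kappa$ are exactly the multiples of the primitive vector $\kappa/M_G$, so $\ker(\Delta : \Z^{V}\to\Z^{V}) = \Z^{V}\cap\R\kappa = \Z\,(\kappa/M_G)$. This gives (1) immediately: two rank-one sublattices generated by the positive primitive vectors $\kappa/M_G$ and $\one$ coincide iff $\kappa/M_G = \one$, i.e. iff every $\kappa(v)$ equals $M_G$. For (2), since all entries are positive we have $M_G = \gcd\{\kappa(v)\} \le \kappa(s)$ for each $s$, so $M_G = \kappa(s)$ for every $s$ exactly when all the $\kappa(s)$ are equal to their gcd. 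Thus (1) and (2) are both equivalent to the assertion that $\kappa$ is constant.

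Finally I would connect this to Eulerianness. Evaluating on the all-ones vector gives $\Delta\one(v) = \outdeg(v) - \indeg(v)$, so $\one\in\ker(\Delta)$ iff $G$ is balanced, which by Euler's theorem for strongly connected multigraphs is equivalent to (3). But $\ker_{\R}(\Delta) = \R\kappa$, so $\one$ lies in the kernel precisely when $\one$ is parallel to $\kappa$, i.e. when $\kappa$ is constant. This closes the loop: (3) $\iff$ $\kappa$ constant $\iff$ (1) $\iff$ (2). The only genuinely nontrivial step is the first one, the identification of $\ker_{\R}(\Delta)$ with $\R\kappa$; the delicate point there is matching orientation conventions, namely that it is the spanning trees oriented \emph{toward} $v$ (not away) that appear for the particular Laplacian $\Delta f(v) = d_v f(v) - \sum_{\mathrm{head}(e)=v} f(\mathrm{tail}(e))$ defined here.
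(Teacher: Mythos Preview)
Your proof is correct. The paper actually proves this proposition as part of an enlarged Proposition~\ref{p.Eul2} that appends three further equivalent conditions involving the sandpile group $K(G,s)$, and in that proof the implication $(1)\Rightarrow(2)$ is routed through the chain $(1)\Rightarrow(4)\Rightarrow(5)\Rightarrow(6)\Rightarrow(2)$, invoking Theorem~\ref{thm:groupiso} on the sandpile group with and without sink. Your argument bypasses that machinery: once $\ker_{\R}\Delta=\R\kappa$ is known (the paper's Lemma~\ref{lem:kerper}, cited from Aldous and Broder) and the rank is $n-1$, the primitive integer generator is $\kappa/M_G$ and the equivalence $(1)\Leftrightarrow(2)$ drops out immediately. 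The ingredients you use---$\Delta\kappa=0$, one-dimensional kernel, Eulerian $\Leftrightarrow$ balanced---are exactly those the paper records in Lemmas~\ref{lem:period} and~\ref{lem:kerper} and in the $(1)\Leftrightarrow(3)$ step, so your route is not mathematically new; it is simply the direct path through those lemmas, shorter and free of sandpile theory, at the price of not delivering the extra conditions~(4)--(6).
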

\end{samepage}

Our main result is in some sense dual to Proposition~\ref{p.Eul}: it gives several equivalent characterizations of the graphs with Pham index $1$. These \textbf{coEulerian} graphs are the farthest from being Eulerian.

\old{
A function $f$ is called harmonic if $\Delta f = 0$.  Compared to other strongly connected graphs, the Eulerian graphs have ``a lot'' of integer-valued harmonic functions. This might seem a surprising thing to say, since the harmonic functions are just the constants!  But for $G$ finite and strongly connected, the kernel of $\Delta$ is always spanned by a single integer function $\pi$, which moreover has all values strictly positive.  If $G$ is not Eulerian then $\pi \neq \one$, which means that $G$ has fewer harmonic functions (with values bounded by a fixed constant $C$, say) than does an Eulerian graph. 
}

Our motivation for considering coEulerian graphs and the Pham index comes from chip-firing, which we now describe. 
A \textbf{chip configuration} on $G$, or simply \textbf{configuration} for short, is a function $\sigma : V \to \Z$. If $\sigma(v)>0$ we think of a pile of $\sigma(v)$ chips at vertex $v$, and if $\sigma(v)<0$ we think of a hole waiting to be filled by chips.
Denoting by $d_{v}$ the outdegree of vertex $v$, we say that
$v$ is \textbf{stable} for $\sigma$ if $\sigma(v)<d_{v}$,
and \textbf{active} for $\sigma$ otherwise. A vertex $v$ can \textbf{fire} by sending one chip along each outgoing edge, resulting in the new configuration
	\[ \sigma' = \sigma - \Delta \delta_v \]
where $\Delta$ is the graph Laplacian \eqref{e.thelaplacian} and $\delta_v(w)$ is $1$ if $v=w$ and $0$ otherwise.  Concretely, we may think of $\sigma, \sigma'$ as column vectors and $\Delta \delta_v$ as a column of the matrix
\[
\Delta_{vw}=\begin{cases}
-d_{wv}, & v\neq w\\
d_{v}-d_{vv}, & v=w
\end{cases}
\]
where $d_{wv}$ denotes the number of directed edges of $G$ from $w$ to $v$. More generally, we can specify a \textbf{firing vector} $\textbf{x} \in \N^V$ and fire each vertex $v$ a total of $\textbf{x}(v)$ times, resulting in the configuration $\sigma'=\sigma-\Delta\textbf{x}$. Here and throughout, $\N = \{0,1,2,\ldots\}$.

A \textbf{legal firing sequence} is a finite sequence of configurations $\sigma_0, \ldots, \sigma_k$ such that each $\sigma_i$ for $i=1,\ldots,k$ is obtained from $\sigma_{i-1}$ by firing a vertex that is active for $\sigma_{i-1}$. A configuration $\sigma$ is called \textbf{stable} if $\sigma(v)<d_v$ for all $v\in V$.   
We say that $\sigma$ \textbf{stabilizes} if there is legal firing sequence $\sigma = \sigma_0, \ldots, \sigma_k$ such that $\sigma_k$ is stable. Bj\"{o}rner, Lov\'{a}sz, and Shor posed the following problem in 1991 \cite{BLS}. \hypertarget{d.halting}~

\decision{The halting problem for chip-firing}
{the adjacency matrix of a finite, strongly connected multigraph $G$ and a chip configuration $\sigma$ on $G$ with $\sigma\geq0$,}
{whether $\sigma$ stabilizes.}

Write $|\sigma| = \sum_{v \in V} \sigma(v)$ for the total number of chips. This quantity is conserved by firing (since $|\Delta \delta_v | = 0$ for all $v \in V$). The maximal stable configuration 
	\[ \sigma_{\max}(v)=d_{v}-1 \]
has $|\sigma_{\max}| = \# E - \# V$.  By the pigeonhole principle, any configuration $\sigma$ with $|\sigma| > \# E - \# V$ has at least one unstable vertex, so such $\sigma$ does not stabilize. A natural question arises: Which directed graphs have the property that every chip configuration of $\#E - \# V$ chips stabilizes? These graphs are the subject of our main result.

We write $\Z^V_0$ for the set of $\sigma \in \Z^V$ such that $|\sigma|=0$.

\begin{samepage}
\begin{thm}
\label{t.main} The following are equivalent for a strongly connected directed multigraph $G=(V,E)$.
\begin{enumerate}
\item $\mathrm{Im} (\Delta : \Z^V \to \Z^V) = \Z^V_0$.
\item $M_G=1$.
\item A chip configuration $\sigma$ on $G$ stabilizes if and only if $\left|\sigma\right| \leq \#E - \# V$.
\item For all $s \in V$, the sandpile group $K(G,s)$ is cyclic with generator $\overline{\beta_s}$.
\item For some $s \in V$, the sandpile group $K(G,s)$ is cyclic with generator $\overline{\beta_s}$.
\end{enumerate}
\end{thm}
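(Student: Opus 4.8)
The plan is to route everything through $M_G$: first an adjugate identity for $\Delta$ that yields (1)$\Leftrightarrow$(2) and pins down $\kappa(s)$, then a single order computation that yields (2)$\Leftrightarrow$(4)$\Leftrightarrow$(5), and finally a least-action argument for (2)$\Leftrightarrow$(3). Throughout, let $\pi$ denote the period vector, i.e.\ the primitive positive generator of $\ker(\Delta:\Z^V\to\Z^V)$ discussed around Proposition~\ref{p.Eul}, and note that the left kernel of $\Delta$ is $\Z\one$ since every column of $\Delta$ sums to $0$.

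Because $G$ is strongly connected, $\Delta$ has rank $\#V-1$, so $\det\Delta=0$ and $\Delta\,\mathrm{adj}(\Delta)=\mathrm{adj}(\Delta)\,\Delta=0$. Each column of $\mathrm{adj}(\Delta)$ then lies in $\ker\Delta=\Q\pi$ and each row lies in the left kernel $\Q\one$, which forces $\mathrm{adj}(\Delta)_{ij}=\mu\,\pi_i$ for a single scalar $\mu$; as the entries are integers and $\pi$ is primitive, $\mu\in\Z$. The diagonal entry $\mathrm{adj}(\Delta)_{ss}$ is the reduced determinant $\det\widetilde{\Delta}_s=\kappa(s)$ (matrix-tree theorem), so $\kappa(s)=\mu\,\pi_s$, and taking $\gcd$ over $s$ with $\gcd_s\pi_s=1$ gives $\mu=M_G$. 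Hence $\kappa(s)=M_G\,\pi_s$, and every $(\#V-1)\times(\#V-1)$ minor of $\Delta$ equals $\pm M_G\,\pi_i$, so the top determinantal divisor of $\Delta$ is $M_G$. Since $\Z^V/\Im\Delta\cong T\oplus\Z$ with $T=\Z^V_0/\Im\Delta$ the finite torsion subgroup and $|T|$ equal to that determinantal divisor, we get $|T|=M_G$; thus $\Im\Delta=\Z^V_0$ iff $M_G=1$, which is (1)$\Leftrightarrow$(2).

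The sandpile group is $K(G,s)=\mathrm{coker}(\widetilde{\Delta}_s)$, of order $\kappa(s)=M_G\,\pi_s$. Writing $\beta_s=(\Delta\delta_s)|_{V\setminus s}$ for the effect on the non-sink vertices of firing $s$, I would compute $\ord(\overline{\beta_s})$ exactly. Restricting $\Delta\pi=0$ to the coordinates $V\setminus s$ gives $\pi_s\beta_s=-\sum_{v\neq s}\pi_v(\Delta\delta_v)|_{V\setminus s}\in\widetilde{\Delta}_s\Z^{V\setminus s}$, so $\ord(\overline{\beta_s})\mid\pi_s$. Conversely, if $m\beta_s\in\widetilde{\Delta}_s\Z^{V\setminus s}$ then $\Delta(m\delta_s-\hat x)$ is supported on $\{s\}$ for some $\hat x\in\Z^V$ vanishing at $s$; but $\Im\Delta\subseteq\Z^V_0$ forces this vector to be $0$, so $m\delta_s-\hat x\in\ker\Delta=\Z\pi$, and reading the $s$-coordinate gives $\pi_s\mid m$. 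Hence $\ord(\overline{\beta_s})=\pi_s$, and $\langle\overline{\beta_s}\rangle=K(G,s)$ iff $\pi_s=M_G\pi_s$ iff $M_G=1$. This yields (2)$\Rightarrow$(4), the trivial (4)$\Rightarrow$(5), and (5)$\Rightarrow$(2) simultaneously.

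For (2)$\Leftrightarrow$(3), recall first the pigeonhole direction: any $\sigma$ with $|\sigma|>\#E-\#V$ has an active vertex and never stabilizes. The engine for the rest is the equivalence ``$\sigma$ stabilizes iff there is $x\in\N^V$ with $\sigma-\Delta x$ stable.'' Its nontrivial direction I would prove directly: if such an $x$ exists (set $\tau=\sigma-\Delta x$, stable) yet some legal firing ran forever, the odometer $u_t$ would be unbounded, so at the first time a coordinate $w$ satisfies $u_t(w)>x(w)$ we have $u_{t-1}\leq x$ with equality at $w$; then $\Delta(x-u_{t-1})(w)=-\sum_v d_{vw}(x-u_{t-1})(v)\leq 0$ gives $(\sigma-\Delta u_{t-1})(w)\leq\tau(w)<d_w$, so $w$ was not active, a contradiction. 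Granting this: if $M_G=1$ then $\Im\Delta=\Z^V_0$, so for $|\sigma|\leq\#E-\#V$ choose a stable $\tau$ with $|\tau|=|\sigma|$, write $\sigma-\tau=\Delta x$, and add a large multiple of $\pi>0$ to arrange $x\geq0$, whence $\sigma$ stabilizes; this is (2)$\Rightarrow$(3). If instead $M_G>1$, pick $\rho\in\Z^V_0\setminus\Im\Delta$ and set $\sigma=\sigma_{\max}+\rho$: then $|\sigma|=\#E-\#V$, yet since $\sigma_{\max}$ is the only stable configuration of that total and $\sigma-\sigma_{\max}=\rho\notin\Im\Delta$, no $x\geq0$ makes $\sigma-\Delta x$ stable, so $\sigma$ does not stabilize and (3) fails; this is (3)$\Rightarrow$(2).

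I expect the main obstacle to be the least-action termination lemma of the last paragraph, namely making precise that a single nonnegative vector reaching a stable configuration forces every legal firing sequence to halt; by contrast the adjugate identity $\mathrm{adj}(\Delta)_{ij}=M_G\,\pi_i$ is the conceptual linchpin that lets all three equivalences share the common parameter $M_G$.
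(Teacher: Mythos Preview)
Your argument is correct. The chip-firing portions---the order computation $\ord(\overline{\beta_s})=\pi_s$ and the least-action route to (2)$\Leftrightarrow$(3)---are essentially the paper's arguments: the paper derives $\ord(\overline{\beta_s})=\pi(s)$ by the same kernel calculation just before Lemma~\ref{lem:order}, and proves (5)$\Rightarrow$(3) and (3)$\Rightarrow$(4) by the same ``$\sigma_{\max}$ is the unique stable configuration with $\#E-\#V$ chips'' trick together with Lemma~\ref{l.lap} (which it cites rather than reproves).

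Where you genuinely diverge is in establishing $\lvert \Z^V_0/\Delta\Z^V\rvert=M_G$. The paper obtains this by building the isomorphism $K(G,s)/\langle\overline{\beta_s}\rangle\cong\Z^V_0/\Delta\Z^V$ (Theorem~\ref{thm:groupiso}, via the workhorse Lemma~\ref{cycleequiv}) and then reading off the index from Lemma~\ref{lem:order}. You instead compute the adjugate $\mathrm{adj}(\Delta)_{ij}=M_G\pi_i$ directly from the left and right kernels, identify $M_G$ as the $(n-1)$st determinantal divisor, and invoke Smith normal form. Your route is more elementary and self-contained: it avoids the coset comparison machinery entirely and isolates the identity $\kappa(s)=M_G\pi_s$ as pure linear algebra. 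What the paper's route buys is the stronger structural statement $K(G,s)/\langle\overline{\beta_s}\rangle\cong\Z^V_0/\Delta\Z^V$ for \emph{every} $G$ (not just $M_G=1$), which is of independent interest and is reused in Proposition~\ref{p.Eul2}.

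One cosmetic point: the paper's $\beta_s$ is the restriction of $-\Delta\delta_s$, not $+\Delta\delta_s$, so your sign convention differs; this is harmless for the order computation but worth aligning.
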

\end{samepage}

Items (1) and (2) are in some sense dual to their counterparts in Proposition~\ref{p.Eul}, so we propose the term \textbf{coEulerian} for a graph satisfying the equivalent conditions of Theorem~\ref{t.main}. The sandpile group $K(G,s)$ and $\overline{\beta_s}$ are defined below in Section~\ref{s.mainsection}. For a dual counterpart to item (4), see Proposition~\ref{p.Eul2}(4).

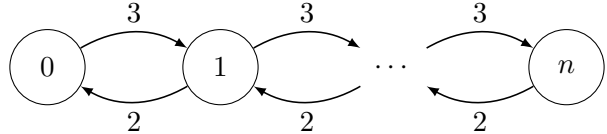
\begin{figure}[here]
\centering
\tikzstyle{vert}=[circle,draw,inner sep=0pt,minimum size=10mm]
\tikzstyle{cdots}=[circle,inner sep=0pt,minimum size=10mm]
\tikzstyle{arrow}=[<-,>=latex,semithick]

\begin{tikzpicture}[node distance=23mm,above,bend right]
	
\node[vert] (node1) {$0$};
\node[vert] (node2) [right of=node1] {$1$};
\node[cdots] (node3)   [right of=node2] {$\cdots$};
\node[vert] (node4)    [right of=node3] {$n$};

\draw [arrow] (node1) [below] to node {2} node {} (node2);
\draw [arrow] (node2) to node {3} node {} (node1);
\draw [arrow] (node2) [below] to node {2} node {} (node3);
\draw [arrow] (node3) to node {3} node {} (node2);
\draw [arrow] (node3) [below] to node {2} node {} (node4);
\draw [arrow] (node4) to node {3} node {} (node3);

\end{tikzpicture}
\caption{Example of a coEulerian graph: a path of length $n$ with edge multiplicities $2$ to the right and $3$ to the left.  It has $\kappa(v) = \pi(v) = 2^v 3^{n-v}$ spanning trees oriented toward $v$, so its Pham index is $M_G = \gcd(2^n, 2^{n-1}3, \ldots, 3^n) = 1$.}
\label{f.path}
\end{figure}

\subsection{History}

Let us call a graph \emph{bidirected} if it is obtained from an undirected graph by replacing each undirected edge $\{u,v\}$ by a pair of directed edges $(u,v)$ and $(v,u)$. All bidirected graphs are Eulerian. 

One of the earliest results in chip-firing is the following observation of Tardos. 

\begin{lem} \cite[Lemma 4]{Tardos} \label{l.tardos}
Let $\sigma$ be a configuration on a bidirected graph $G$.
If there is a legal firing sequence for $\sigma$ in which every vertex of $G$ fires at least once, then $\sigma$ does not stabilize.
\end{lem}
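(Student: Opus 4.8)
The plan is to argue by contradiction, combining the Eulerian identity $\Delta \one = \zero$ with the standard ``abelian'' structural properties of chip-firing. The identity holds because every bidirected graph is Eulerian, so $\one \in \ker \Delta$ by Proposition~\ref{p.Eul}; this is the only place the bidirected hypothesis enters. So suppose, for contradiction, that $\sigma$ \emph{does} stabilize, and let $u \in \N^V$ be its \textbf{odometer}, i.e.\ $u(v)$ is the number of times $v$ fires during stabilization, so that $\tau := \sigma - \Delta u$ is stable.

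First I would transfer the hypothesis from the given legal sequence to the odometer. Let $x \in \N^V$ be the firing vector of the legal firing sequence in which every vertex fires at least once, so $x \geq \one$. By the monotonicity half of the abelian property --- any legal firing sequence from $\sigma$ fires each vertex at most as often as the odometer does --- we obtain $u \geq x \geq \one$. In particular $u - \one \in \N^V$.

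The heart of the argument is then a one-line computation exploiting $\Delta \one = \zero$, namely that firing every vertex one extra time changes the configuration not at all:
\[ \sigma - \Delta(u - \one) = \sigma - \Delta u + \Delta \one = \tau, \]
which is stable. Now I invoke the Least Action Principle: if a configuration stabilizes with odometer $u$, then every $z \in \N^V$ with $\sigma - \Delta z$ stable satisfies $u \leq z$ (the odometer is the pointwise-minimal non-negative stabilizing vector). Taking $z = u - \one \geq \zero$ gives $u \leq u - \one$, i.e.\ $\zero \leq -\one$, a contradiction. Hence $\sigma$ cannot stabilize.

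I expect the only genuine subtlety to be marshalling the two standard abelian facts --- monotonicity of legal sequences and the minimality of the odometer --- and applying them to a configuration $\sigma$ that is merely \emph{assumed} to stabilize. A more hands-on alternative would try to exhibit an explicit infinite legal sequence, by showing that the terminal configuration $\sigma - \Delta x$ again admits a legal sequence firing every vertex (and then looping, since $\Delta \one = \zero$); but checking legality of such a continuation, for instance by reversing the given sequence, is fiddly because a vertex active before firing need not be active afterward. Routing the argument through the Least Action Principle is precisely what sidesteps this difficulty. Finally, it is worth remarking that since bidirectedness is used only through $\Delta \one = \zero$, the same proof establishes the lemma for every Eulerian graph --- the ``dual'' shadow of the fact (Proposition~\ref{p.Eul}) that Eulerianness is exactly the condition $\one \in \ker \Delta$.
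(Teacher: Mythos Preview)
The paper does not give its own proof of this lemma; it is simply cited from \cite[Lemma~4]{Tardos}. So there is no ``paper's proof'' to compare against, and your task is just to be correct --- which you are.

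Your argument is sound. Two small remarks. First, the Least Action Principle as stated in this paper (Lemma~\ref{l.lap}) is only the existence statement, not the minimality of the odometer; you are tacitly invoking the full version from \cite[Lemma~4.3]{BL1}, which does assert that the odometer is the pointwise-minimum nonnegative $z$ with $\sigma-\Delta z$ stable. That is fine, but worth flagging. Second, your closing observation is on the mark: since the only use of bidirectedness is $\Delta\one=\zero$, the same argument proves the Eulerian case, and indeed this is exactly the specialization $\pi=\one$ of the Bj\"orner--Lov\'asz generalization stated immediately afterward as Lemma~\ref{l.bl}. Your proof in fact adapts verbatim to prove Lemma~\ref{l.bl} as well: replace $\one$ by $\pi$ throughout, use $\Delta\pi=\zero$, and note that $u\geq x\geq \pi$ gives $u-\pi\in\N^V$.
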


Tardos used Lemma~\ref{l.tardos} to prove that for any configuration $\sigma$ on a simple bidirected $n$-vertex graph, if $\sigma$ stabilizes then it does so in $O(n^4)$ firings.  Eriksson showed, however, that on a general directed graph a configuration may require an exponential number of firings to stabilize \cite{Eriksson}.  Bj\"{o}rner and Lov\'{a}sz \cite{BL92} generalized the ``at least once'' condition of Lemma~\ref{l.tardos} to directed graphs as follows.

\begin{lem} \cite{BL92} \label{l.bl}
For every strongly connected multigraph $G$ there is a unique primitive $\pi \in \N^V$ such that $\Delta \pi = \zero$. If there is a legal firing sequence for $\sigma$ in which every vertex $v$ fires at least $\pi(v)$ times, then $\sigma$ does not stabilize.
\end{lem}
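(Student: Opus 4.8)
My plan is to prove the two assertions separately: first the existence and uniqueness of the primitive period vector $\pi$, then the non-stabilization criterion. For the first assertion I would show that $\Delta:\Z^V\to\Z^V$ has rank exactly $\#V-1$, so that $\ker\Delta$ is one-dimensional. The upper bound is immediate: summing the entries of each column of the matrix $\Delta$ gives $d_w-\sum_v d_{wv}=0$, so $\one^{\mathrm T}\Delta=0$ and the rows are dependent. For the lower bound I would invoke the Matrix--Tree theorem: deleting the row and column indexed by any fixed $s$ leaves a minor equal to $\kappa(s)$, and strong connectivity gives $\kappa(s)\geq 1$, so the rank is at least $\#V-1$. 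Thus $\ker\Delta$ is spanned by a single rational vector, and to see it may be taken strictly positive I would appeal to the Perron--Frobenius theorem (equivalently, to the Markov chain tree theorem, which identifies $\ker\Delta$ with the span of $v\mapsto\kappa(v)$): the balance equations $d_v\pi(v)=\sum_w d_{wv}\pi(w)$ characterizing $\ker\Delta$ force any nonzero solution to have all coordinates of one sign and none zero. Clearing denominators and dividing by the gcd of the coordinates produces a vector in $\N^V$ with strictly positive, coprime entries; uniqueness of this primitive vector follows because the kernel is one-dimensional, so any two candidates are positive rational multiples of each other and coprimality pins the ratio to $1$.

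For the second assertion I would argue by contradiction, assuming $\sigma$ stabilizes, leaning on the abelian property of chip-firing (see \cite{BLS}). Let $x\in\N^V$ be the firing vector of the hypothesized legal sequence, so $x(v)\geq\pi(v)$ for all $v$, and let $u\in\N^V$ be the stabilization odometer of $\sigma$, characterized by two standard properties: (i) every legal firing sequence for $\sigma$ has firing counts bounded above by $u$, and (ii) $u$ is the pointwise-minimal element of $\{w\in\N^V : \sigma-\Delta w \text{ is stable}\}$ (the least action principle). Property (i) gives $u\geq x\geq\pi$, so $u-\pi\in\N^V$. Now I would exploit $\Delta\pi=\zero$: since $\Delta(u-\pi)=\Delta u$, the configuration $\sigma-\Delta(u-\pi)=\sigma-\Delta u$ is stable, so $u-\pi$ lies in the set in (ii). Minimality then forces $u\leq u-\pi$, i.e.\ $\pi\leq\zero$, contradicting the strict positivity of $\pi$ from the first part. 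Hence $\sigma$ cannot stabilize.

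The routine pieces (the rank count, clearing denominators) are standard; the real content sits in the strict positivity of $\pi$ and in property (i). For (i) I would give the usual ``first overshoot'' argument: if some vertex fired more than $u$ times, examine the first moment a vertex $v$ is fired for the $(u(v)+1)$-st time; at that instant the partial firing vector $y$ satisfies $y\leq u$ with $y(v)=u(v)$, and since the off-diagonal entries $-d_{wv}$ of $\Delta$ are nonpositive we get $(\Delta y)(v)\geq(\Delta u)(v)$, so the value of $v$ is at most its stable value $<d_v$ and $v$ is not active, a contradiction. I expect property (i), together with checking that the abelian machinery (least action principle) is available for configurations that need not be nonnegative, to be the main obstacle; once those are in hand, everything else reduces to the identity $\Delta\pi=\zero$ and the sign of $\pi$.
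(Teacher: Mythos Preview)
The paper does not prove this lemma: it is quoted from \cite{BL92}, and the first assertion is restated later as Lemma~\ref{lem:period}, again with a citation rather than a proof. So there is no in-paper argument to compare against, and the relevant question is simply whether your proof is correct. It is.

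Your treatment of the first assertion matches the pieces the paper does supply: the rank bound via the Matrix--Tree theorem (the paper's \S\ref{s.background} notes $\det\Delta_s=\kappa(s)>0$) and the identification of the kernel with the span of $\kappa$ (the paper's Lemma~\ref{lem:kerper}). Your second argument---contradiction via least action, using $\Delta\pi=\zero$ to produce the smaller odometer $u-\pi$---is the standard one and is essentially how Bj\"orner and Lov\'asz argue in \cite{BL92}. The ``first overshoot'' proof of property~(i) is correct; note that this is exactly the content of the Least Action Principle cited in the paper as Lemma~\ref{l.lap} (the version in \cite{BL1} includes the pointwise minimality you need, not just the existence statement quoted here). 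Your worry about nonnegativity of $\sigma$ is not an obstacle: the monotonicity argument for~(i) uses only the sign pattern of $\Delta$ and the activity condition $\sigma(v)\geq d_v$, not $\sigma\geq\zero$.
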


This gives a procedure for deciding the \hyperlink{d.halting}{\textsc{halting problem for chip-firing}}: perform legal firings in any order until either you reach a stable configuration or the criterion of Lemma~\ref{l.bl} certifies that $\sigma$ will not stabilize.  There is only one problem: the values $\pi(v)$ may be exponentially large.  Figure~\ref{f.path} shows a coEulerian graph on vertex set $\{0,1,\ldots,n\}$ with $\pi(v)=2^{v}3^{n-v}$.  The algorithm just described would run for exponential time on this graph, but Theorem~\ref{t.main} gives a much faster algorithm to decide the halting problem for chip-firing on any coEulerian graph: count the total number of chips and compare to $\#E - \#V$. As far as we are aware, this is the first progress on the halting problem for chip-firing on directed graphs since the work of Bj\"{o}rner and Lov\'{a}sz \cite{BL92}.

\subsection{Related work}

Pham \cite{Trung} introduced the index $M_G$ to answer a question posed in \cite{HLMPPW}: Which directed graphs $G$ have the property that all unicycles of $G$ lie in the same orbit of the rotor-router operation?  He showed that $G$ has this property if and only if $M_G=1$, and that in general the number of orbits is $M_G$.

The \hyperlink{d.halting}{\textsc{halting problem for chip-firing}} is a special case of the halting problem for a class of automata networks called \emph{abelian networks}. A polynomial time algorithm to decide if a given abelian network halts on \emph{all} inputs appears in \cite{BL2}, where it is remarked that the problem of deciding whether a given abelian network halts on a \emph{given} input is a subtler problem.  The \textsc{halting problem for chip-firing} is of this latter type (the ``input'' to the abelian network is the chip configuration $\sigma$).

\subsection{Outline}

The next section is devoted to the proofs of Proposition~\ref{p.Eul} and Theorem~\ref{t.main}, and concludes with Proposition~\ref{p.cactus} characterizing the graphs that are both Eulerian and coEulerian. In Section~\ref{s.NPcomplete} we show that despite its being easy for Eulerian graphs and coEulerian multigraphs, the \textsc{halting problem for chip-firing} on finite directed multigraphs is $\NP$-complete in general. One ingredient in the proof is Theorem~\ref{t.latticelaplacian}, which expresses an arbitrary $(n-1)$-dimensional lattice in $\Z_0^n$ as the Laplacian lattice of a strongly connected multigraph.

\section{Sandpiles and the halting problem}
\label{s.mainsection}

To prove Theorem~\ref{t.main} we will compare chip-firing with and without a sink vertex.  This kind of comparison appears also in the study of the abelian sandpile threshold state \cite{L14}, and in the extension of the Biggs-Merino polynomial to Eulerian graphs \cite{PP} and to all strongly connected graphs \cite{Chan}.  Sections~\ref{s.background} and~\ref{s.cyclic} review the relevant background on chip-firing and the sandpile group. In Section~\ref{s.isom} we relate the sandpile groups with and without sink, and in Section~\ref{s.proofs} we prove the results stated in the introduction.

\subsection{Background}
\label{s.background}

The following result frees us from considering only legal firing
sequences in looking for an answer to the halting problem for chip-firing.

\begin{lem} 
\label{l.lap}
\emph{(Least Action Principle, \cite[Lemma 4.3]{BL1})} Let $\sigma$ be a chip configuration on a finite directed graph. Then $\sigma$ stabilizes if and only if there exists an $\mathbf{x}\in\mathbb{N}^{V}$ such that $\sigma-\Delta\mathbf{x}$ is stable.
\end{lem}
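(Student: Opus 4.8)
The plan is to prove the two implications separately, with all the content lying in the converse. One direction is essentially definitional: if $\sigma$ stabilizes, then there is a legal firing sequence $\sigma = \sigma_0, \ldots, \sigma_k$ with $\sigma_k$ stable; letting $\mathbf{x}(v)$ be the number of times $v$ fires along this sequence gives $\mathbf{x} \in \N^V$ with $\sigma - \Delta \mathbf{x} = \sigma_k$ stable, as required.

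For the converse I am given some $\mathbf{x} \in \N^V$ with $\sigma - \Delta\mathbf{x}$ stable, and must produce an \emph{actual} legal firing sequence terminating in a stable configuration. I would run the obvious greedy procedure — repeatedly fire any active vertex — and argue it halts. The engine is a monotonicity (abelian-type) lemma: the firing vector $\mathbf{y}$ accumulated along any legal firing sequence for $\sigma$ satisfies $\mathbf{y} \leq \mathbf{x}$ coordinatewise. I prove this by induction on the length of the sequence, the base case $\mathbf{y} = \zero \leq \mathbf{x}$ being clear. Suppose $\mathbf{y} \leq \mathbf{x}$ and we are about to legally fire a vertex $v$; it suffices to show $\mathbf{y}(v) < \mathbf{x}(v)$, for then the updated vector $\mathbf{y} + \delta_v$ still satisfies the bound. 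Suppose instead $\mathbf{y}(v) = \mathbf{x}(v)$, and set $\mathbf{z} = \mathbf{x} - \mathbf{y} \geq \zero$, so that $\mathbf{z}(v) = 0$. Using the definition \eqref{e.thelaplacian}, the diagonal term drops out and every off-diagonal contribution carries a nonnegative weight $\mathbf{z}(\mathrm{tail}(e)) \geq 0$, giving
\[ (\sigma - \Delta\mathbf{y})(v) - (\sigma - \Delta\mathbf{x})(v) = \Delta\mathbf{z}(v) = - \sum_{\mathrm{head}(e) = v} \mathbf{z}(\mathrm{tail}(e)) \leq 0. \]
Since $\sigma - \Delta\mathbf{x}$ is stable we have $(\sigma - \Delta\mathbf{x})(v) \leq d_v - 1$, hence $(\sigma - \Delta\mathbf{y})(v) \leq d_v - 1 < d_v$, so $v$ is in fact \emph{stable} for $\sigma - \Delta\mathbf{y}$. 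This contradicts the legality of firing $v$, establishing $\mathbf{y}(v) < \mathbf{x}(v)$ and closing the induction.

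With the bound $\mathbf{y} \leq \mathbf{x}$ in hand, termination is immediate: each legal firing increases $|\mathbf{y}| = \sum_{v} \mathbf{y}(v)$ by one, yet $|\mathbf{y}| \leq |\mathbf{x}|$ is bounded, so the greedy procedure performs only finitely many firings. When it halts, no vertex is active, i.e.\ the configuration reached is stable, and therefore $\sigma$ stabilizes.

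I expect the main obstacle to be formulating the monotonicity lemma cleanly rather than any difficult estimate. The key realization is that the correct inductive invariant is the coordinatewise bound of the running firing vector against a \emph{fixed} stabilizing vector $\mathbf{x}$, and that the sign structure of $\Delta$ (nonpositive off-diagonal entries) is exactly what forces an active vertex $v$ to retain unused ``firing budget'' $\mathbf{y}(v) < \mathbf{x}(v)$. Everything after that — boundedness implies termination, termination implies stability — is routine bookkeeping.
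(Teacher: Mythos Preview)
Your proof is correct. Note, however, that the paper does not actually prove this lemma: it is quoted with a citation to \cite[Lemma~4.3]{BL1} and no argument is given in the paper itself. Your write-up is the standard Least Action Principle argument --- the monotonicity of the running firing vector against any stabilizing vector via the sign pattern of $\Delta$ --- and it goes through cleanly in the paper's conventions, so there is nothing to correct or compare.
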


A sizable portion of the ground soon to be covered is motivated by
the following principle: in looking for a stabilizing firing sequence,
instead of firing willy-nilly we can establish some structure by
designating a special vertex $s$ as the \textbf{sink}, which fires only if no other vertex is active.
We fire active, nonsink vertices until all nonsink vertices are stable. At this point if the sink is stable we are done; otherwise, we fire
the sink (once) and repeat.

The \textbf{reduced Laplacian} $\Delta_{s}$ is the matrix obtained by deleting the row and column of $\Delta$
corresponding to the sink $s$. To emphasize the distinction between $\Delta$ and $\Delta_s$, we will sometimes refer to $\Delta$ as the \textbf{total Laplacian}. 
In what follows we will sometimes identify the vertex set $V$ with $\{1,\ldots,n\}$ and set $s=n$. 

\begin{defn}
Let $G=(V,E)$ be a finite strongly connected multigraph and fix $s \in V$. The \textbf{sandpile group }of $G$ with sink $s$ is
the group quotient
\[
K(G,s)=\mathbb{Z}^{n-1}/\Delta_{s}\mathbb{Z}^{n-1}
\]
where $\Delta_{s}\mathbb{Z}^{n-1}$ is the integer column-span of
$\Delta_{s}$.
\end{defn}

A \textbf{sandpile} is a chip configuration $\eta\in\mathbb{Z}^{n-1}$ indexed by the nonsink vertices. When we wish to emphasize that a chip configuration is defined also at the sink, we call it a \textbf{total configuration}. One can imagine that a sandpile is composed of sand grains which behave just like chips except that they are small enough to disappear down the sink.
The definitions ``stable'' and ``firing
vector'' have obvious analogues for sandpiles: a sandpile
$\eta$ is stable if $\eta(i)<d_{i}$ for all $v_{i}\neq s$; and firing vectors for sandpiles live in $\mathbb{Z}^{n-1}$. The sandpile group treats two sandpiles as equivalent if one can be obtained from the other by firing nonsink vertices. We write $\overline{\eta}$ for the equivalence class of $\eta$ in $K(G,s)$. 

On a strongly connected graph, every sandpile stabilizes, and its stabilization does not depend on the order of firings \cite[Lemmas 2.2 and 2.4]{HLMPPW}; we denote the stabilization of $\eta$ by $\eta^{\circ}$. Next we recall the connection between sandpiles and spanning trees.

\begin{defn}
An \textbf{oriented spanning tree} of a directed graph $G=(V,E)$ rooted
at $s\in V$ is a spanning subgraph $T=(V,A)$ such that\end{defn}
\begin{enumerate}
\item Every vertex $v\neq s$ has outdegree $1$ in $T$.
\item $s$ has outdegree $0$ in $T$.
\item $T$ has no oriented cycles.
\end{enumerate}
Hence an oriented spanning tree has as its limbs edges that point
toward the root. Let $\kappa(s)$ denote the number of oriented
spanning trees in $G$ rooted at $s$.
\begin{thm}
 \emph{(Matrix tree theorem \cite[Theorem 5.6.8]{Stanley} and \cite[Lemma 2.8]{HLMPPW})} 
For a finite strongly connected multigraph $G$ and a vertex $s$, 
  \[ \kappa(s) = \det \Delta_s = \#K(G,s). \]
\end{thm}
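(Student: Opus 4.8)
This statement bundles two independent equalities, $\kappa(s)=\det\Delta_s$ and $\det\Delta_s=\#K(G,s)$, and I would establish them separately.

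I would start with the algebraic identity $\det\Delta_s=\#K(G,s)$, which needs only that $\Delta_s$ is a nonsingular integer matrix. Nonsingularity follows from strong connectivity: the real kernel of $\Delta$ is the one-dimensional span of the positive vector $\pi$ from Lemma~\ref{l.bl}, so if $\Delta_s x=0$ for some $x\in\R^{n-1}$, then extending $x$ by $0$ at the sink and using that $\Delta$ has zero column sums ($\one^{T}\Delta=0$) forces the extended vector into $\ker\Delta=\R\pi$; since $\pi$ has a nonzero sink coordinate while the extension does not, $x=0$. Given $\det\Delta_s\neq 0$, I would put $\Delta_s$ into Smith normal form $\Delta_s=U\,\mathrm{diag}(e_1,\dots,e_{n-1})\,V$ with $U,V\in GL_{n-1}(\Z)$, whence
\[ K(G,s)=\Z^{n-1}/\Delta_s\Z^{n-1}\;\cong\;\bigoplus_{i=1}^{n-1}\Z/e_i\Z, \]
a finite group of order $\prod_i|e_i|=|\det\Delta_s|$. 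Positivity of $\det\Delta_s$ (so that the absolute value may be dropped) follows from the first equality, $\det\Delta_s$ being a count of trees.

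For $\kappa(s)=\det\Delta_s$ I would run the combinatorial matrix-tree argument. Identifying $V$ with $\{1,\dots,n\}$ and $s=n$, expand $\det\Delta_s=\det\Delta_s^{T}$ as a signed sum over permutations of the non-sink vertices. Writing the diagonal entry as $\Delta_{vv}=d_v-d_{vv}=\sum_{u\neq v}d_{vu}$ (the number of out-edges of $v$, loops cancelling) and the off-diagonal entry of $\Delta^{T}$ as $-d_{vw}$, I would regroup the expansion so that each surviving contribution records, for every non-sink vertex $v$, a choice of one outgoing edge of $v$; this is a functional digraph on the non-sink vertices whose edges may also point into $s$. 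A sign-reversing involution that toggles a distinguished directed cycle among the non-sink vertices cancels precisely the configurations containing a cycle, leaving the positively-signed edge-choices in which every non-sink vertex has a unique directed path to $s$ --- exactly the oriented spanning trees rooted at $s$. Hence $\det\Delta_s=\kappa(s)$. (Alternatively, one can induct by deletion--contraction on an out-edge of a fixed non-sink vertex, matching $\kappa_G(s)=\kappa_{G-e}(s)+\kappa_{G/e}(s)$ against the corresponding cofactor recursion for $\det\Delta_s$.)

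The main obstacle is the bookkeeping in this last step rather than any deep difficulty: one must respect the orientation convention (the off-diagonal entry $\Delta_{vw}=-d_{wv}$ counts edges \emph{into} $v$, so transposing or re-indexing is needed to read off out-edges), correctly account for multiple edges and loops in the diagonal term, and verify that the cycle-toggling involution is well-defined and fixed-point-free on the cyclic configurations so that exactly the tree terms survive. Once $\det\Delta_s\neq 0$ is in hand, the passage to $\#K(G,s)$ via Smith normal form is routine.
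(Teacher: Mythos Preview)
The paper does not prove this theorem; it is quoted as a known result with citations to \cite[Theorem~5.6.8]{Stanley} for $\kappa(s)=\det\Delta_s$ and to \cite[Lemma~2.8]{HLMPPW} for $\det\Delta_s=\#K(G,s)$, and the paper then \emph{uses} it to deduce that $\Delta_s$ is invertible and that $\Delta$ has rank $n-1$. So there is no in-paper argument to compare against.

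Your sketch is the standard one and is essentially correct. Two small remarks. First, you invoke Lemma~\ref{l.bl} to say that the \emph{real} kernel of $\Delta$ is the line $\R\pi$, but that lemma only asserts uniqueness of the primitive \emph{nonnegative integer} vector in the kernel; to get $\dim_\R\ker\Delta=1$ you need the (easy, standard) Perron--Frobenius type fact for the Laplacian of a strongly connected graph, proved independently of the matrix-tree theorem. This matters because the paper's own logic runs in the opposite direction (matrix-tree $\Rightarrow$ $\det\Delta_s>0$ $\Rightarrow$ rank $n-1$), so you should be explicit that you are establishing rank $n-1$ on separate grounds to avoid circularity. Second, your handling of the orientation convention is right: with $\Delta_{vw}=-d_{wv}$ for $v\neq w$, passing to $\Delta_s^{T}$ makes the off-diagonal $(v,w)$ entry $-d_{vw}$, so the row-by-row expansion really does record out-edges of each non-sink vertex, and the cycle-toggling involution goes through. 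The Smith normal form computation of $\#K(G,s)=\lvert\det\Delta_s\rvert$ is routine once nonsingularity is in hand, and positivity follows from the tree count as you say.
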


Note that if $G$ is strongly connected then it has at least one spanning tree rooted at $s$, so $\Delta_{s}$ is invertible; since the rows of $\Delta$ sum to $\textbf{0}$, this implies that $\Delta$ has rank $n-1$. 

There is a natural representative for each equivalence
class of $K(G,s)$. To describe this representative, we say that a sandpile $\eta$
is \textbf{accessible} if from any other sandpile it is possible to
obtain $\eta$ by adding a nonnegative number of sand grains at each vertex and then selectively firing
active vertices. A sandpile that is both stable and accessible is
called \textbf{recurrent}.

\begin{thm} \cite[Cor.\ 2.16]{HLMPPW}
\label{t.isom}
The set $\Rec (G,s)$ of all recurrent sandpiles is an abelian
group under the operation 
	\[ \eta \oplus \xi := (\eta+\xi)^{\circ} \]
and it is isomorphic via the inclusion map to the sandpile group $K(G,s)$.
\end{thm}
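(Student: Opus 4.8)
The plan is to show that the class map $\Phi\colon \Rec(G,s)\to K(G,s)$, $\eta\mapsto\overline{\eta}$, is a bijection, transport the group structure of $K(G,s)$ through it, and then check that the transported operation agrees with $\oplus$. The only input $\Phi$ needs at the outset is that firing preserves $K(G,s)$-classes: a single firing of a nonsink vertex subtracts a column of $\Delta_{s}$, so stabilization subtracts an element of $\Delta_{s}\Z^{n-1}$, and hence $\overline{\eta^{\circ}}=\overline{\eta}$ for every sandpile $\eta$. In particular $\Phi$ is well defined on $\Rec(G,s)$.

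I expect injectivity to be the main obstacle, and I would handle it using the fact that $\Delta_{s}$ is a nonsingular $M$-matrix, so that $\Delta_{s}^{-1}\geq 0$ entrywise; this is where strong connectivity is used. Suppose $\eta,\xi\in\Rec(G,s)$ with $\overline{\eta}=\overline{\xi}$, say $\xi-\eta=\Delta_{s}\zz$. Since $\xi$ is accessible, there is an $\alpha\geq 0$ with $(\eta+\alpha)^{\circ}=\xi$; writing $\uu\geq 0$ for the associated firing vector gives $\eta+\alpha-\Delta_{s}\uu=\xi$, whence $\alpha=\Delta_{s}(\zz+\uu)=:\Delta_{s}\ww$. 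Because $\alpha\geq 0$ and $\Delta_{s}^{-1}\geq 0$, the vector $\ww=\Delta_{s}^{-1}\alpha$ is nonnegative, so firing each vertex $v$ exactly $\ww(v)$ times turns $\eta+\alpha$ into the stable configuration $\eta+\alpha-\Delta_{s}\ww=\eta$. By uniqueness of the stabilization this forces $\xi=(\eta+\alpha)^{\circ}=\eta$, giving injectivity.

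For surjectivity I would first record the closure property that if $\eta$ is recurrent and $\beta\geq 0$ then $(\eta+\beta)^{\circ}$ is again recurrent: it is stable, and it is accessible because from any sandpile $\rho$ one first reaches $\eta$ (by accessibility of $\eta$) and then adds $\beta$ and stabilizes, using that stabilization is abelian. Granting that $\sigma_{\max}$ is recurrent (it is stable, and a routine monotonicity argument shows it is accessible), the configurations $(\sigma_{\max}+\sigma)^{\circ}$ with $\sigma\geq 0$ are all recurrent and have classes $\overline{\sigma_{\max}}+\overline{\sigma}$. Since the image of $\N^{n-1}$ in the finite group $K(G,s)$ is a submonoid containing the generators $\overline{e_{i}}$, it is a subgroup equal to all of $K(G,s)$; translating by $\overline{\sigma_{\max}}$ then shows every class has a recurrent representative, so $\Phi$ is onto. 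Finally, the same closure property shows $(\eta+\xi)^{\circ}\in\Rec(G,s)$ for $\eta,\xi$ recurrent, so $\oplus$ is a well defined operation on $\Rec(G,s)$, and $\Phi(\eta\oplus\xi)=\overline{(\eta+\xi)^{\circ}}=\overline{\eta}+\overline{\xi}=\Phi(\eta)+\Phi(\xi)$. A bijection intertwining $\oplus$ with the group law of $K(G,s)$ forces $(\Rec(G,s),\oplus)$ to be a group and $\Phi$ to be a group isomorphism, as claimed.
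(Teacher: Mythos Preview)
The paper does not supply its own proof of this theorem; it is quoted from \cite[Cor.~2.16]{HLMPPW} and used as background. So there is no in-paper argument to compare against, and your proposal should be judged on its own.

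Your overall architecture is the standard one and is sound: surjectivity via $\sigma_{\max}$ and the observation that the image of $\N^{n-1}$ in the finite group $K(G,s)$ is a submonoid, hence all of $K(G,s)$; closure of $\Rec(G,s)$ under $\oplus$; and the compatibility $\Phi(\eta\oplus\xi)=\Phi(\eta)+\Phi(\xi)$ are all handled correctly.

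There is, however, a real gap in the injectivity step. From $\ww\geq 0$ and the stability of $\eta+\alpha-\Delta_s\ww=\eta$ you conclude ``by uniqueness of the stabilization'' that $(\eta+\alpha)^\circ=\eta$. That inference is false in general. On the bidirected path $1$--$2$--$3$ with sink $3$ one has $\Delta_s=\bigl(\begin{smallmatrix}1&-1\\-1&2\end{smallmatrix}\bigr)$; for $\sigma=(1,1)$ the legal stabilization is $\sigma^\circ=(0,1)$ with odometer $(2,1)$, yet $\sigma-\Delta_s(3,2)^T=(0,0)$ is also stable with $(3,2)\geq 0$. Confluence of legal firings, together with the Least Action Principle, only guarantees that the legal odometer $\uu$ is coordinatewise \emph{minimal} among nonnegative $\xx$ with $\sigma-\Delta_s\xx$ stable; they do not say every such $\xx$ produces the same endpoint.

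The repair is short and uses the one hypothesis you have not yet exploited, namely that $\eta$ (not just $\xi$) is recurrent. Least Action does give $\uu\leq\ww$, hence $\zz=\ww-\uu\geq 0$ and $\eta=\xi-\Delta_s\zz$. Now rerun your argument with the roles of $\eta$ and $\xi$ exchanged, using accessibility of $\eta$, to obtain $\zz'\geq 0$ with $\xi=\eta-\Delta_s\zz'$. Adding gives $\Delta_s(\zz+\zz')=0$; nonsingularity of $\Delta_s$ together with $\zz,\zz'\geq 0$ forces $\zz=\zz'=0$, whence $\eta=\xi$. With this correction the proof is complete.
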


The \textbf{recurrent identity element} $e_s \in \Rec(G,s)$ is the unique recurrent sandpile in $\Delta_s \Z^{n-1}$.  The recurrent representative $\eta_{\text{rec}}$ of a sandpile $\eta$ can be found by adding the identity and stabilizing:
	\[ \eta_{\text{rec}} = (\eta+e_s)^\circ. \]
Dhar's burning test \cite{Dha90} determines whether a given sandpile on an Eulerian graph is recurrent.  Speer~\cite{Spe93} generalized the burning test to directed graphs. Dhar's and Speer's tests are closely related to Lemmas~\ref{l.tardos} and~\ref{l.bl} respectively.

\subsection{Cyclic subgroups of the sandpile group}
\label{s.cyclic}

For $s,v \in V$ let $\beta_s(v) = d_{sv}$, the number of directed edges from $s$ to $v$.
In accordance with our principle of controlled sink firing, given a recurrent sandpile $\eta$ we are interested in 
\[
C_{\eta}=\{\left(\eta+k\beta_{s}\right)^{\circ}:k\in\mathbb{N}\},
\]
the set of sandpiles obtainable from $\eta$ by firing the sink $s$ some nonnegative number of times and then stabilizing. Note that starting with a recurrent sandpile, adding sand grains to the nonsink vertices and then stabilizing results in another recurrent sandpile; so all sandpiles in 
$C_{\eta}$ are recurrent. Note that 
\[
\left(\eta+\beta_{s}\right)^{\circ}=\left(\eta+e_{s}+\beta_{s}\right)^{\circ}=\eta\oplus\gamma_{s}
\]
where $\gamma_{s}=(e_{s}+\beta_{s})^{\circ}$ is the recurrent representative of $\beta_s$. It follows
that 
	\[ C_{\eta}=\eta\oplus\left\langle \gamma_{s}\right\rangle \]
where $\left\langle \gamma_{s}\right\rangle $ denotes the cyclic
subgroup of $\Rec (G,s)$ generated by $\gamma_{s}$. 

To investigate these cosets of $\langle \gamma_s \rangle$, we recall the period vector introduced by Bj\"{o}rner and Lov\'{a}sz.

\begin{defn} \cite{BL92}
Given a graph $G$ with total Laplacian $\Delta$, a vector $\mathbf{p}\in\mathbb{N}^{n}$
is called a \textbf{period vector} for $G$ if $\mathbf{p} \neq \zero$ and $\Delta\mathbf{p}=\zero$.
A period vector is \textbf{primitive} if the greatest common divisor of its entries is $1$.
\end{defn}
In other words, a period vector $\mathbf{p}$ has the property that firing each vertex
$v\in V$ a total of $\mathbf{p}(v)$ times results in no net movement of chips.
The following lemma sums up some useful properties of period vectors.

\begin{lem} \cite[Prop.\ 4.1]{BL92} 
\label{lem:period} 
A strongly connected multigraph $G$ has a unique primitive period vector $\pi_{G}$. All entries of $\pi_G$ are strictly positive, and all period vectors of $G$ are of the form $k\pi_{G}$ for $k=1,2,\ldots$. 
Moreover, if $G$ is Eulerian, then $\pi_{G}=\mathbf{1}$.
\end{lem}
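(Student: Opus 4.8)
The plan is to pin down the real kernel of $\Delta$ first and then read off the integral structure. Since $G$ is strongly connected, the Matrix-Tree theorem quoted above gives $\det\Delta_s=\kappa(s)\geq 1$, so each reduced Laplacian is invertible and $\Delta$ has rank $n-1$; hence $\ker(\Delta:\R^n\to\R^n)$ is a line. Every assertion of the lemma follows once I exhibit a generator of this line with strictly positive integer entries, so the crux is positivity. I would produce such a generator explicitly from the adjugate matrix $\mathrm{adj}(\Delta)$, which also makes the link to the tree-counts $\kappa(v)$ transparent.

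The key step uses $\Delta\,\mathrm{adj}(\Delta)=\mathrm{adj}(\Delta)\,\Delta=\det(\Delta)\,I=\zero$. The first equation says every column of $\mathrm{adj}(\Delta)$ lies in $\ker\Delta$. For the second, I note that directly from the matrix form of $\Delta$ each column sums to zero, i.e.\ $\mathbf{1}^{\top}\Delta=\zero$; since the left kernel of $\Delta$ also has dimension $n-\mathrm{rank}\,\Delta=1$, it equals $\R\mathbf{1}$, so every row of $\mathrm{adj}(\Delta)$ is constant. Writing $\mathrm{adj}(\Delta)_{vw}=c_v$, the diagonal cofactor identity gives $c_v=\det\Delta_v=\kappa(v)$, the number of spanning trees oriented toward $v$. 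Thus $\mathrm{adj}(\Delta)=\kappa\,\mathbf{1}^{\top}$ with $\kappa:=(\kappa(v))_{v\in V}$, and every column of this matrix equals $\kappa$; as each $\kappa(v)\geq 1$ by strong connectivity, $\kappa$ is a strictly positive integer vector spanning $\ker\Delta$. In particular $\Delta\kappa=\zero$, so a period vector exists, and every real solution of $\Delta\mathbf{p}=\zero$ is a scalar multiple of $\kappa$. (The same positivity could instead be extracted from Perron--Frobenius applied to the stationary distribution of the uniform-out-edge random walk, but the adjugate route is self-contained given the Matrix-Tree theorem.)

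It remains to extract the integral statements. The integer points of the line $\R\kappa$ form a rank-one lattice $\Z\pi_G$, where $\pi_G:=\kappa/\gcd\{\kappa(v):v\in V\}$ is the unique primitive vector on this line with positive entries. A period vector is by definition a nonzero element of $\ker\Delta\cap\N^n$; since $\pi_G$ is primitive and strictly positive, these are exactly the vectors $k\pi_G$ with $k=1,2,\ldots$ (for $k\leq 0$ the vector is zero or has negative entries, and non-integer multiples are not integral). This establishes existence and uniqueness of the primitive period vector, its positivity, and the classification of all period vectors as $k\pi_G$.

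Finally, if $G$ is Eulerian then it is balanced, $\mathrm{indeg}(v)=\mathrm{outdeg}(v)=d_v$ for every $v$, so $\Delta\mathbf{1}(v)=d_v-\sum_w d_{wv}=d_v-\mathrm{indeg}(v)=0$. Hence $\mathbf{1}\in\ker\Delta$; being positive and primitive, it must equal $\pi_G$. The only genuine obstacle in the argument is the positivity of the kernel generator, which is precisely what the adjugate/Matrix-Tree computation supplies; the remaining steps are routine linear algebra over $\Z$.
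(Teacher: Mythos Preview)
Your proof is correct. The paper does not actually prove this lemma: it is quoted as \cite[Prop.~4.1]{BL92} with no argument given, and the fact $\Delta\kappa=\zero$ is stated separately as Lemma~\ref{lem:kerper} with a citation to \cite{Aldous,Broder}. So there is no ``paper's own proof'' to compare against; your proposal fills in what the paper outsources.

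That said, it is worth noting what your approach buys relative to the paper's presentation. By computing $\mathrm{adj}(\Delta)=\kappa\,\one^{\top}$ directly, you obtain Lemma~\ref{lem:period} and Lemma~\ref{lem:kerper} simultaneously from the Matrix-Tree theorem, rather than invoking two separate references. The adjugate argument also makes the identity $\pi_G=\kappa/M_G$ (which the paper derives immediately after Lemma~\ref{lem:kerper}) fall out for free. One small point of care: when you say ``every row of $\mathrm{adj}(\Delta)$ is constant'' you are using that the left kernel of $\Delta$ is exactly $\R\one$, which requires both $\one^{\top}\Delta=\zero$ and $\mathrm{rank}\,\Delta=n-1$; you have both of these, so the step is sound.
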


A consequence of the strict positivity of $\pi_G$ that we will use several times is that $\Delta \Z^n = \Delta \N^n$.

We now introduce a very special period vector. Recall that $\kappa(v)$
denotes the number of spanning trees of $G$ oriented toward $v$.
\begin{lem}[\cite{Aldous,Broder}]
\label{lem:kerper} $\Delta \kappa = \zero$.
\end{lem}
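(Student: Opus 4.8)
The plan is to unwind the claim into the componentwise identity it asserts. Since
\[ (\Delta\kappa)(v) = d_v\,\kappa(v) - \sum_{w \in V} d_{wv}\,\kappa(w), \]
proving $\Delta\kappa = \zero$ is the same as establishing $d_v\,\kappa(v) = \sum_{w} d_{wv}\,\kappa(w)$ for every $v \in V$. I would prove this equality by a bijection between two families of (tree, distinguished edge) pairs. The left side $d_v\kappa(v)$ counts pairs $(T,e)$ in which $T$ is an oriented spanning tree rooted at $v$ and $e$ is one of the $d_v$ edges leaving $v$. The right side $\sum_w d_{wv}\kappa(w)$ counts pairs $(T',e')$ in which $T'$ is rooted at some $w$ and $e'$ is one of the $d_{wv}$ edges from $w$ into $v$. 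Because both sides range over specific edges of the multigraph, edge multiplicities are accounted for automatically.

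The bijection is a cycle rotation. Given $(T,e)$ with $e = (v \to u)$, form $T + e$. Since $T$ is an arborescence toward $v$, in $T+e$ every vertex has out-degree exactly $1$, so this functional graph contains a unique directed cycle; as $v$'s only out-edge is $e$ and $u$ reaches $v$ along $T$, the cycle is $v \to u \to \cdots \to v$ and passes through $v$. Writing $(w \to v)$ for its last edge, I delete it to obtain $T' := T + e - (w\to v)$, in which $w$ has acquired out-degree $0$ and $v$ out-degree $1$; thus $T'$ is an arborescence rooted at $w$, and I set $e' := (w \to v)$. The inverse sends $(T',e')$ to $(T' + e' - f,\, f)$, where $f$ is the out-edge of $v$ in $T'$ and $T'+e'$ is again a functional graph with a unique cycle, now through $w$.

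The step needing genuine care is the cycle bookkeeping: one must check that adjoining a single out-edge at the root of an arborescence produces a graph of constant out-degree $1$ whose unique cycle is forced to contain $v$, so that $w$ and $e'$ are well defined, and that the analogous statement holds for $T'+e'$ in the inverse direction. Once this is verified, the two maps are manifestly mutually inverse, and summing over the distinguished edge yields the identity at every $v$, hence $\Delta\kappa = \zero$.

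It is worth recording two alternatives that lean on results already in hand. First, by Lemma~\ref{lem:period} the rational kernel of $\Delta$ is the line spanned by the positive period vector $\pi_G$, so it suffices to show $\kappa$ is a scalar multiple of $\pi_G$; the classical adjoint $\mathrm{adj}(\Delta)$ has rank one with columns in $\ker\Delta$ and rows in the left kernel $\langle \one^{T}\rangle$, forcing $\mathrm{adj}(\Delta)_{vw} = \lambda\,\pi_G(v)$, while the matrix tree theorem identifies the diagonal entry $\mathrm{adj}(\Delta)_{vv}$ with $\det \Delta_v = \kappa(v)$; comparing gives $\kappa = \lambda\,\pi_G \in \ker\Delta$. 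Second, the displayed identity is precisely the statement that $d_v\kappa(v)$ is a stationary measure for the random walk with transition probabilities $d_{vw}/d_v$, which is the content of the Markov chain tree theorem underlying the cited work of Aldous and Broder.
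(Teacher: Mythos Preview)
Your proposal is correct. The cycle-rotation bijection between pairs $(T,e)$ with $T$ an in-arborescence at $v$ and $e$ an out-edge of $v$, and pairs $(T',e')$ with $T'$ an in-arborescence at some $w$ and $e'$ an edge $w\to v$, is the standard combinatorial proof of the identity $d_v\kappa(v)=\sum_w d_{wv}\kappa(w)$; your description of the unique cycle in the functional graph $T+e$ and of the inverse map is accurate, and the edge-level formulation handles multiplicities and loops without trouble. The two alternative arguments you sketch are also valid: the classical adjoint argument uses only that $\Delta$ has rank $n-1$ with left kernel $\one^T$, together with the matrix tree identification $\mathrm{adj}(\Delta)_{vv}=\det\Delta_v=\kappa(v)$.

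As for comparison with the paper: there is nothing to compare, because the paper does not supply a proof of Lemma~\ref{lem:kerper} at all---it simply cites Aldous and Broder and moves on. Your bijective argument is precisely the combinatorial mechanism underlying the Markov chain tree theorem those references establish, so in that sense you have reconstructed the cited result rather than diverged from it.
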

Recall the \textbf{Pham index} $M=M_G$, defined as the greatest common divisor of the spanning tree counts $\left\{ \kappa(v)|v\in V\right\} $.
By Lemmas \ref{lem:period} and \ref{lem:kerper}, the vector $\pi=\frac{1}{M} \kappa$
is the unique primitive period vector of $G$.  

Next we argue that $\pi(s) = \ord(\gamma_s)$, the order of $\gamma_s$ in the group $\Rec(G,s)$.  Fixing a positive integer $m$, we have that $m \overline{\beta_s}$ is trivial in $K(G,s)$ if and only if there exists $\yy\in \Z^{n-1}$ such that $m \beta_s = \Delta_s \yy$. Setting $\xx = (\yy,0) \in \Z^n$ and noting that $\beta_s$ is the restriction of $-\Delta \delta_s$ to the nonsink vertices, such $\yy$ exists if and only if there is a vector $\xx \in \Z^{n}$ such that $\xx(s) = 0$ and
	$ \Delta (\xx + m\delta_s) = \zero $.
(The equality in the sink coordinate of the first equation follows from the equality in the nonsink coordinates because the sum of all the coordinates is $0$.) 
Setting $\pp = \xx + m\delta_s$, such $\xx$ exists if and only if there is a vector $\pp \in \Z^n$ such that
	\[ \Delta \pp = \zero \qquad \mbox{and} \qquad \pp(s)=m. \]
By Lemma~\ref{lem:kerper}, since the kernel of $\Delta$ is one-dimensional and $m$ is positive, such $\pp$ must be a positive integer multiple of the primitive period vector $\pi$.  
Such $\pp$ exists if and only if $\pi(s)$ divides $m$. Thus $\pi(s)$ is the order of $\overline{\beta_s}$ in $K(G,s)$, which by Theorem~\ref{t.isom} is the order of $\gamma_s$ in $\Rec(G,s)$.  Recalling that $\pi(s)=\kappa(s)/M$, we conclude the following.

\begin{lem} \label{lem:order} \cite[Lemma~6]{Trung}
For any choice of sink $s$, we have that 
\[
\ord (\gamma_{s})=\kappa(s)/M= \# \Rec(G,s) /M
\]
Thus, $M = \# \Rec (G,s)/\left\langle \gamma_{s}\right\rangle$ is the number of distinct cosets of $\left\langle \gamma_{s}\right\rangle $ in $\Rec(G,s)$.
\end{lem}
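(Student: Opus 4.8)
The plan is to reduce the statement to a single computation: the order of $\gamma_s$ in $\Rec(G,s)$. Once $\ord(\gamma_s) = \kappa(s)/M$ is in hand, the rest is immediate. By Theorem~\ref{t.isom} the group $\Rec(G,s)$ is isomorphic to $K(G,s)$, and the matrix tree theorem gives $\#K(G,s) = \kappa(s)$; hence $\#\Rec(G,s) = \kappa(s) = \#\Rec(G,s)$, and Lagrange's theorem says the number of cosets of $\langle\gamma_s\rangle$ is $\#\Rec(G,s)/\ord(\gamma_s) = \kappa(s)/(\kappa(s)/M) = M$.

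To compute $\ord(\gamma_s)$, first I would transport the problem from $\Rec(G,s)$ to $K(G,s)$: since $\gamma_s = (e_s+\beta_s)^\circ$ is the recurrent representative of $\beta_s$, the isomorphism of Theorem~\ref{t.isom} identifies $\gamma_s$ with $\overline{\beta_s}$, so $\ord(\gamma_s) = \ord(\overline{\beta_s})$. It then suffices to find the least positive $m$ with $m\overline{\beta_s}=0$ in $K(G,s)$, that is, with $m\beta_s \in \Delta_s\Z^{n-1}$.

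The heart of the argument is a chain of equivalences translating this divisibility condition on the reduced Laplacian into one on period vectors of the total Laplacian. Fixing $m>0$, I would show that $m\beta_s = \Delta_s\yy$ for some $\yy\in\Z^{n-1}$ if and only if there is $\pp\in\Z^n$ with $\Delta\pp = \zero$ and $\pp(s)=m$. For one direction, set $\xx=(\yy,0)$ and use that $\beta_s$ is minus the restriction of $\Delta\delta_s$ to the nonsink coordinates, so that $\pp := \xx + m\delta_s$ is annihilated by $\Delta$ in every nonsink coordinate; the sink coordinate then vanishes automatically because each column of $\Delta$ sums to zero. Finally, invoking Lemmas~\ref{lem:period} and~\ref{lem:kerper}, the kernel of $\Delta$ over $\Z$ is exactly $\Z\pi$, so any such $\pp$ is an integer multiple of $\pi$; since $\pi$ is strictly positive and $m>0$, that multiple is positive, forcing $\pi(s)\mid m$, while conversely $\pp=(m/\pi(s))\pi$ witnesses the condition whenever $\pi(s)\mid m$. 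The least such $m$ is thus $\pi(s)=\kappa(s)/M$, using $\pi=\kappa/M$.

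The individual steps are routine; the one demanding care is the bookkeeping in this chain of equivalences — specifically the passage between the reduced Laplacian $\Delta_s$ and the total Laplacian $\Delta$, where one must check that the sink coordinate of $\Delta\pp$ is controlled automatically by the remaining coordinates through the zero column-sum identity, together with the positivity argument that pins down the sign of the multiple of $\pi$. Everything else is Lagrange's theorem and the already-established identities $\pi=\kappa/M$ and $\kappa(s)=\#K(G,s)$.
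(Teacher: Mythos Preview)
Your proposal is correct and follows essentially the same route as the paper: transport $\gamma_s$ to $\overline{\beta_s}$ via Theorem~\ref{t.isom}, rewrite $m\beta_s\in\Delta_s\Z^{n-1}$ as the existence of $\pp\in\Z^n$ with $\Delta\pp=\zero$ and $\pp(s)=m$ (using the zero column-sum identity for the sink coordinate), and then invoke $\ker\Delta=\Z\pi$ together with positivity to conclude $\ord(\gamma_s)=\pi(s)=\kappa(s)/M$. The remaining coset count via Lagrange and the matrix tree theorem is exactly as the paper intends.
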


\subsection{Comparison of sandpile groups with and without sink}
\label{s.isom}

We now investigate the structure of the quotient group $\Rec (G,s)/\left\langle \gamma_{s}\right\rangle $.
Recall that $\beta_{s}$ is the sandpile $\beta_{s}(v)=d_{sv}$, where $s$ is the designated sink vertex and that $\overline{\beta_s}$ is the equivalence class of $\beta_s$ in $K(G,s)$. As before we write $\Z^n_0$ for the group of vectors
in $\mathbb{Z}^{n}$ with coordinates summing to $0$.
\begin{thm}
\label{thm:groupiso} For any strongly connected multigraph $G$ and any vertex $s$,
\[
\Rec (G,s)/\left\langle \gamma_{s}\right\rangle \cong K(G,s)/\left\langle \overline{\beta_{s}}\right\rangle \cong \Z^n_0/\Delta\mathbb{Z}^{n}.
\]

\end{thm}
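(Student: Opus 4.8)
The plan is to establish the two isomorphisms separately, the first being essentially formal and the second carrying the real content. For the first isomorphism $\Rec(G,s)/\langle\gamma_s\rangle \cong K(G,s)/\langle\overline{\beta_s}\rangle$, I would invoke Theorem~\ref{t.isom}, which identifies $\Rec(G,s)$ with $K(G,s)$ as abelian groups via the inclusion map. Under this identification $\gamma_s = (e_s+\beta_s)^\circ$, being the recurrent representative of $\beta_s$, is sent to $\overline{\beta_s}$. Since any group isomorphism carries the cyclic subgroup generated by an element onto the cyclic subgroup generated by its image, it maps $\langle\gamma_s\rangle$ onto $\langle\overline{\beta_s}\rangle$ and hence descends to an isomorphism of the quotients. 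Nothing beyond bookkeeping is required here.

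For the second isomorphism, I would first rewrite the left-hand side as a quotient of lattices in $\Z^{n-1}$. Since $K(G,s) = \Z^{n-1}/\Delta_s\Z^{n-1}$, the preimage in $\Z^{n-1}$ of the cyclic subgroup $\langle\overline{\beta_s}\rangle$ is the sublattice $\Z\beta_s + \Delta_s\Z^{n-1}$, so the third isomorphism theorem gives
\[
K(G,s)/\langle\overline{\beta_s}\rangle \cong \Z^{n-1}/(\Z\beta_s + \Delta_s\Z^{n-1}).
\]
It therefore suffices to produce an isomorphism $\Z^n_0/\Delta\Z^n \cong \Z^{n-1}/(\Z\beta_s+\Delta_s\Z^{n-1})$.

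The key device is the linear map $\phi:\Z^n_0 \to \Z^{n-1}$ that deletes the sink coordinate. Because a vector in $\Z^n_0$ is determined by its nonsink entries (the sink entry being the negative of their sum), $\phi$ is an isomorphism of groups, with inverse filling in the sink coordinate. It then remains to show that $\phi$ carries $\Delta\Z^n$ onto $\Z\beta_s + \Delta_s\Z^{n-1}$, after which $\phi$ descends to the desired isomorphism of quotients. Here I would first note $\Delta\Z^n \subseteq \Z^n_0$, since each column $\Delta\delta_v$ has coordinate sum $d_v - d_v = 0$, so $\phi$ is defined on all of $\Delta\Z^n$. Then I would compute $\phi$ on the generators $\Delta\delta_v$: for a nonsink $v$, deleting the sink row leaves exactly the $v$-th column of $\Delta_s$, and these columns generate $\Delta_s\Z^{n-1}$; for $v=s$, the restriction of $\Delta\delta_s$ to the nonsink coordinates equals $-\beta_s$, as recorded in Section~\ref{s.cyclic}. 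Hence $\phi(\Delta\Z^n) = \Delta_s\Z^{n-1} + \Z\beta_s$, and composing the two displayed isomorphisms completes the proof.

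The only genuinely delicate step is the bookkeeping in the last paragraph: one must treat the sink column $\Delta\delta_s$ separately from the nonsink columns, and verify that dropping the sink coordinate really sends the nonsink columns of $\Delta$ to the columns of the reduced Laplacian $\Delta_s$ while sending the sink column to $-\beta_s$. Everything else is formal manipulation of quotients.
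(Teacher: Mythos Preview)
Your proof is correct and follows essentially the same approach as the paper: both arguments hinge on the fact that deletion of the sink coordinate gives an isomorphism $\Z^n_0 \cong \Z^{n-1}$ carrying $\Delta\Z^n$ onto $\Delta_s\Z^{n-1} + \Z\beta_s$, which is exactly what you verify by examining the columns $\Delta\delta_v$ separately for $v\neq s$ and $v=s$. The paper organizes this slightly differently, packaging the equivalence $\sigma\equiv\tau \bmod \Delta\Z^n \iff \tilde\sigma\equiv\tilde\tau \bmod \Delta_s\Z^{n-1}+\Z\beta_s$ into the standalone Lemma~\ref{cycleequiv} (together with a third equivalent condition on recurrent representatives) so that it can be reused in the proof of Theorem~\ref{t.main}; your inline computation is the cleaner route if one only wants the isomorphism itself.
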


The meat of the proof for this theorem is packaged in the following workhorse lemma. 
To translate between sandpiles and total configurations, we introduce some notation: Given a vector $\xx\in\mathbb{Z}^{n}$,
we denote by $\tilde{\xx}$ the restriction of $\xx$ to the nonsink vertices; and given $\eta\in\mathbb{Z}^{n-1}$, we write $\eta_{k}$
for the extension of $\eta$ to $\mathbb{Z}^{n}$ such that $\left|\eta_{k}\right|=k$.

\begin{lem}
\label{cycleequiv}Let $\sigma,\tau\in\Z^n$ with $|\sigma|=|\tau|$. Then the following are equivalent.\end{lem}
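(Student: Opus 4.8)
The plan is to read the lemma as a dictionary between two notions of equivalence: equivalence of total configurations $\sigma,\tau\in\Z^n$ under the image of the full Laplacian $\Delta$, and equivalence of the associated sandpiles $\tilde\sigma,\tilde\tau$ modulo the cyclic subgroup generated by $\beta_s$ (equivalently, modulo $\gamma_s$ in $\Rec(G,s)$). The proof should be a careful translation between the total Laplacian on $\Z^n$ and the reduced Laplacian $\Delta_s$ on $\Z^{n-1}$. The one structural fact I would lean on throughout is already recorded above: for a nonsink vertex $v$ the column $\Delta\delta_v$ restricts on the nonsink coordinates to the corresponding column of $\Delta_s$, whereas $\Delta\delta_s$ restricts to $-\beta_s$. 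Combined with the fact that the columns of $\Delta$ sum to $\zero$, this means that two vectors of equal total mass that agree on all nonsink coordinates must also agree at the sink --- exactly the bookkeeping used in the argument preceding Lemma~\ref{lem:order}.

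First I would prove the implication from ``$\sigma-\tau\in\Delta\Z^n$'' to the sandpile statement. Writing $\sigma-\tau=\Delta\xx$ and splitting $\xx$ into its nonsink part $\tilde\xx$ and its sink value $\xx(s)$, restriction to the nonsink coordinates gives $\tilde\sigma-\tilde\tau=\Delta_s\tilde\xx-\xx(s)\beta_s$. Reducing modulo $\Delta_s\Z^{n-1}$ shows $\overline{\tilde\sigma}-\overline{\tilde\tau}=-\xx(s)\,\overline{\beta_s}\in\langle\overline{\beta_s}\rangle$. For the converse, suppose $\tilde\sigma-\tilde\tau=\Delta_s\yy+k\beta_s$ for some $\yy\in\Z^{n-1}$ and $k\in\Z$. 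Setting $\xx=(\yy,-k)\in\Z^n$, the same column identities show that $\Delta\xx$ and $\sigma-\tau$ agree on the nonsink coordinates; since $\Delta\xx$ and $\sigma-\tau$ both have total mass $0$ (the former because the columns of $\Delta$ sum to $\zero$, the latter because $|\sigma|=|\tau|$), agreement on the nonsink coordinates forces agreement at the sink, whence $\sigma-\tau\in\Delta\Z^n$.

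If the remaining equivalent conditions are phrased in terms of recurrent sandpiles and the cosets $C_\eta=\eta\oplus\langle\gamma_s\rangle$, I would close the loop using the machinery of Section~\ref{s.cyclic}: by Theorem~\ref{t.isom} the inclusion identifies $\Rec(G,s)$ with $K(G,s)$, stabilizing a sandpile does not change its class in $K(G,s)$, and $\gamma_s$ corresponds to $\overline{\beta_s}$; hence membership in a common coset of $\langle\gamma_s\rangle$ in $\Rec(G,s)$ is the same as $\overline{\tilde\sigma}-\overline{\tilde\tau}\in\langle\overline{\beta_s}\rangle$. If instead one of the conditions is a genuine firing-sequence statement, I would additionally invoke $\Delta\Z^n=\Delta\N^n$ (the consequence of the strict positivity of $\pi$ noted after Lemma~\ref{lem:period}) to replace the integer vector $\xx$ above by a nonnegative firing vector.

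I expect the main obstacle to be organizational rather than deep: the delicate point is the passage between the $n$-dimensional and $(n-1)$-dimensional pictures, and in particular handling the sink coordinate correctly. Each implication reduces to the single identity relating $\Delta$, $\Delta_s$, and $\beta_s$ together with the mass-conservation argument, so the real risk is a sign error in the $-\xx(s)\beta_s$ term or forgetting that it is precisely the hypothesis $|\sigma|=|\tau|$ that makes the nonsink-to-sink propagation valid. Once the lemma is in hand, Theorem~\ref{thm:groupiso} follows at once, since the lemma says exactly that $\sigma\mapsto\overline{\tilde\sigma}\bmod\langle\overline{\beta_s}\rangle$ descends to a well-defined bijection $\Z^n_0/\Delta\Z^n\to K(G,s)/\langle\overline{\beta_s}\rangle$.
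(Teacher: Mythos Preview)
Your proposal is correct and follows essentially the same approach as the paper: the equivalence of (1) and (2) is obtained by splitting a firing vector $\xx\in\Z^n$ into its nonsink part and sink coordinate, using that the sink column of $\Delta$ restricts to $-\beta_s$, and then recovering the sink coordinate from the hypothesis $|\sigma|=|\tau|$; the equivalence of (2) and (3) is the observation that passing to recurrent representatives and replacing $\overline{\beta_s}$ by $\gamma_s$ does not change the coset in $K(G,s)/\langle\overline{\beta_s}\rangle$, exactly as you outline via Theorem~\ref{t.isom}. Your hedge about needing $\Delta\Z^n=\Delta\N^n$ is unnecessary here, since none of the three conditions involves a legal firing sequence.
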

\begin{enumerate}
\item $\sigma\equiv\tau\mod\Delta\mathbb{Z}^{n}$ 
\item $\tilde{\sigma}\equiv\tilde{\tau}\mod\Delta_{s}\mathbb{Z}^{n-1}+\mathbb{Z}\beta_{s}$
\item $\left(\tilde{\sigma}+e_{s}\right)^{\circ}\in\left(\tilde{\tau}+e_{s}\right)^{\circ}\oplus\left\langle \gamma_{s}\right\rangle $\end{enumerate}
\begin{proof}
($1\iff2$) Assume ($1$), and let $m=\left|\sigma\right|=\left|\tau\right|$. Recall
that $\sigma_{k}$ denotes the extension of $\sigma$ to $\mathbb{Z}^{n}$
such that $\left|\sigma_{k}\right|=k$. We observe that ($1$) holds if and only if there is an $\mathbf{x}\in\mathbb{Z}^{n}$ such that $\sigma=\tau-\Delta\mathbf{x}$.
If $\sigma=\tau-\Delta\mathbf{x}$, then 
\[
\sigma=\tau-\Delta\mathbf{x}=\tau-\Delta\left[\begin{array}{c}
\tilde{\mathbf{x}}\\
0
\end{array}\right]-\Delta\left[\begin{array}{c}
0\\
\mathbf{x}(s)
\end{array}\right]=\tau-\left[\begin{array}{c}
\Delta_{s}\tilde{\mathbf{x}}\\
a
\end{array}\right]-\mathbf{x}(s)\mathbf{c}_{s}
\]
where $\mathbf{c}_{s}$ denotes the column of $\Delta$ corresponding
with the sink and $a$ is the dot product of the $n$th row of $\Delta$
with $(\tilde{\mathbf{x}},0)$. Since $\beta_{s}(i)=-\mathbf{c}_{s}(i)$
for each $i\neq s$, it follows that $\tilde{\sigma}=\tilde{\tau}-\Delta_{s}\tilde{\mathbf{x}}+\mathbf{x}(s)\beta_{s}$.
Going the other way, we assume that $\tilde{\sigma}=\tilde{\tau}-k\beta_{s}-\Delta_{s}\tilde{\mathbf{x}}$
for some $k\in\mathbb{N}$ and $\tilde{\mathbf{x}}\in\mathbb{N}^{n-1}$.
Let $\sigma'$ be the total configuration 
\[
\sigma'=\tau-\Delta\left[\begin{array}{c}
\tilde{\mathbf{x}}\\
k
\end{array}\right].
\]
Then $\sigma'(i)=\mbox{\ensuremath{\tilde{\sigma}}}(i)$ for all $i\neq s$
and $\left|\sigma'\right|=\left|\tau\right|$. Since $\sigma(s)$
is determined by $|\sigma|$ and $|\sigma|=|\tau|$, we have that $\sigma'=\sigma$.

$(2\iff3)$ Note that $(3)$ is equivalent to the existence of an $\mathbf{x}$ such that \[ \left(\tilde{\sigma}+e_{s}\right)^{\circ}\equiv\left(\tilde{\tau}+e_{s}\right)^{\circ}+\mathbf{x}(s)\left(\beta_{s}+e_{s}\right)^{\circ}-\tilde{\Delta}_{s}\tilde{\mathbf{x}} \]
which in turn is equivalent to the congruence
$\tilde{\sigma}\equiv\tilde{\tau}\mod\tilde{\Delta}_{s}\mathbb{Z}^{n-1}+\mathbb{Z}\beta_{s}$.
\end{proof}

\begin{proof}[Proof of Theorem \ref{thm:groupiso}] Define a map $\phi:K(G,s)/\left\langle \overline{\beta_{s}}\right\rangle \to \Z^n_0/\Delta\mathbb{Z}^{n}$
sending 
\[ \overline{\eta}\mod\left\langle \overline{\beta_{s}}\right\rangle \quad \mapsto \quad \eta_{0}\mod\Delta\mathbb{Z}^{n}. \]
Let $\eta,\xi\in\Z^{n-1}$. If $\overline{\eta}\equiv\overline{\xi}\mod\left\langle \overline{\beta_{s}}\right\rangle $,
then by Lemma \ref{cycleequiv} we have that $\eta_0\equiv\xi_0\mod\Delta\mathbb{Z}^{n}$,
so that $\phi$ is well-defined. The equation $\eta_{0}+\xi_{0}=\left(\eta+\xi\right)_{0}$
is immediate from the definition, so that $\phi$ is a homomorphism.
The map $\phi$ is also surjective, since for each $\sigma\in \Z^n_0$
there is a corresponding $\tilde{\sigma}\in\mathbb{Z}^{n-1}$ , and
$\phi\left(\overline{\tilde{\sigma}}\mod\left\langle \overline{\beta_{s}}\right\rangle \right)=\sigma\mod\Delta\mathbb{Z}^{n}$.
We now show that $\phi$ is injective to complete the proof that $\phi$
is an isomorphism. Suppose that $\sigma\equiv\tau\mod\Delta\mathbb{Z}^{n}$.
Then by Lemma \ref{cycleequiv} we have that $\overline{\tilde{\sigma}}\equiv\overline{\tilde{\tau}}\mod\left\langle \overline{\beta_{s}}\right\rangle $
and the theorem is proved.
\end{proof}

\subsection{Eulerian and CoEulerian Graphs}
\label{s.proofs}

We now prove the two results stated in the introduction. We also supplement Proposition~\ref{p.Eul} with three equivalent conditions about the sandpile group.  

\begin{prop} 
\label{p.Eul2}
The following are equivalent for a strongly connected directed multigraph $G=(V,E)$.
\begin{enumerate}
\item $\ker (\Delta : \Z^V \to \Z^V) = \Z \one$.
\item $M_G = \kappa(s)$ for all $s \in V$.
\item $G$ is Eulerian.
\item For all $s \in V$, the element $\overline{\beta_s}$ is trivial in the sandpile group $K(G,s)$.
\item $K(G,s) \cong \Z_0^V/ \Delta \Z^V$ for all $s \in V$.
\item $K(G,s) \cong K(G,s')$ for all $s,s' \in V$.
\end{enumerate}
\end{prop}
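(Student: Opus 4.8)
The plan is to take Proposition~\ref{p.Eul}, which already supplies the equivalences $(1)\iff(2)\iff(3)$, as a black box and to graft on the new conditions (4), (5), (6) using two ingredients developed in Sections~\ref{s.cyclic} and~\ref{s.isom}: the identity $\ord(\overline{\beta_s}) = \pi(s) = \kappa(s)/M$ (Lemma~\ref{lem:order} together with the isomorphism $\Rec(G,s)\cong K(G,s)$ of Theorem~\ref{t.isom}, under which $\gamma_s$ corresponds to $\overline{\beta_s}$), and the isomorphism $K(G,s)/\langle\overline{\beta_s}\rangle \cong \Z_0^V/\Delta\Z^V$ of Theorem~\ref{thm:groupiso}. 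I would prove the proposition by closing the cycle $(2)\Rightarrow(4)\Rightarrow(5)\Rightarrow(6)\Rightarrow(2)$; combined with Proposition~\ref{p.Eul} this links all six conditions.

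For $(2)\Rightarrow(4)$: condition (2) says $\kappa(s)=M$ for every $s$, so $\ord(\overline{\beta_s}) = \kappa(s)/M = 1$, i.e.\ $\overline{\beta_s}$ is trivial in $K(G,s)$ for every $s$, which is (4). For $(4)\Rightarrow(5)$: if $\overline{\beta_s}$ is trivial then the cyclic subgroup $\langle\overline{\beta_s}\rangle$ is trivial, so the left-hand side of Theorem~\ref{thm:groupiso} collapses to $K(G,s)$ itself, giving $K(G,s)\cong \Z_0^V/\Delta\Z^V$ for every $s$, which is (5).

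For $(5)\Rightarrow(6)$: the group $\Z_0^V/\Delta\Z^V$ on the right-hand side of (5) does not depend on the choice of sink, so (5) immediately yields $K(G,s)\cong K(G,s')$ for all $s,s'$. For $(6)\Rightarrow(2)$: taking cardinalities and recalling $\#K(G,s)=\kappa(s)$ from the Matrix Tree Theorem, (6) forces $\kappa(s)=\kappa(s')$ for all $s,s'$; hence the common value equals its own gcd $M$, giving $\kappa(s)=M$ for all $s$, which is (2).

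Since the two genuine theorems this rests on (Theorem~\ref{thm:groupiso} and Lemma~\ref{lem:order}) are already in hand, there is no serious obstacle; the proof is essentially bookkeeping. The one point demanding care is the cardinality count underlying $(6)\Rightarrow(2)$ and, dually, the observation that $\#(\Z_0^V/\Delta\Z^V)=M$ independently of $s$: by Theorem~\ref{thm:groupiso} this order equals $\#K(G,s)/\ord(\overline{\beta_s}) = \kappa(s)/(\kappa(s)/M)=M$, which is exactly the content of Lemma~\ref{lem:order}. Keeping straight that every clause here is quantified over \emph{all} $s$ (unlike the mixed ``for all''/``for some'' phrasing of Theorem~\ref{t.main}) is what makes the cycle close cleanly.
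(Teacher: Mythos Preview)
Your proof is correct and follows essentially the same route as the paper: the implications $(4)\Rightarrow(5)\Rightarrow(6)\Rightarrow(2)$ are identical, and you link in $(1)\iff(2)\iff(3)$ from Proposition~\ref{p.Eul}. The only difference is the bridge into (4): the paper proves $(1)\Rightarrow(4)$ by the one-line observation that $\Delta\one=\zero$ forces $-\Delta\delta_s=\Delta\sum_{v\neq s}\delta_v$, hence $\beta_s=\Delta_s\sum_{v\neq s}\delta_v\in\Delta_s\Z^{n-1}$, whereas you prove $(2)\Rightarrow(4)$ by invoking the order formula $\ord(\overline{\beta_s})=\kappa(s)/M$ from Lemma~\ref{lem:order}. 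Both are valid; the paper's argument is slightly more elementary in that it avoids the period-vector machinery behind Lemma~\ref{lem:order}, while your route has the virtue of making explicit that condition (4) is exactly the statement $\pi(s)=1$ for all $s$.
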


\begin{proof}
($1 \iff 3$) We have $\Delta \one = \zero$ if and only if the indegree of each vertex equals its outdegree. By \cite[Theorem 5.6.1]{Stanley}, for $G$ strongly connected this degree condition is equivalent to the existence of an Eulerian tour.

($1 \implies 4$) By definition, $\beta_s$ is the restriction of $-\Delta \delta_s$ to the nonsink coordinates. If $\Delta \one = \zero$ then $-\Delta \delta_s = \Delta (\sum_{v \neq s} \delta_v)$, so $\beta_s = \Delta_s (\sum_{v \neq s} \delta_v) \in \Delta_s \Z^{n-1}$.

($4 \implies 5$) This follows directly from Theorem \ref{thm:groupiso}.

($5 \implies 6$) Trivial.

($6 \implies 2$) If $K(G,s) \cong K(G,s')$, then equating orders yields $\kappa(s) = \kappa(s')$. If this holds for all vertices $s$ and $s'$, then $M_G := \gcd \{\kappa(v)|v \in V\} = \kappa(s)$ for all $s \in V$.

($2 \implies 1$) If all coordinates of $\kappa$ are equal, then $\Delta \one = \zero$ by Lemma~\ref{lem:kerper}. 
\end{proof}

In particular, the sandpile group $K(G,s)$ is independent of the choice of sink if and only if $G$ is Eulerian (the ``if'' direction is well known \cite[Lemma 4.12]{HLMPPW}). 

\begin{proof}[Proof of Theorem \ref{t.main}]
($3\implies 4$) We prove the contrapositive. Assume there is a sink
$s$ such that $K(G,s)\neq\left\langle \overline{\beta}_{s}\right\rangle $,
and fix the number of chips on $G$ to be $m=\#E-\#V$. Our assumption implies that there are two distinct cosets $C_{1}$ and $C_{2}$ of $\langle \gamma_s \rangle$ such that
$\tilde{\sigma}_{\max}\in C_{1}$. Choosing an $\eta\in C_{2}$,
we remark that $\eta_{m}$ stabilizes if and only if $\eta_{m}\equiv\sigma_{\max}\mod\Delta\mathbb{Z}^{n}$
(since $\sigma_{\max}$ is the only stable total configuration
with $m$ chips). By Lemma~\ref{cycleequiv}, this congruence holds if and only if $\eta\in\tilde{\sigma}_{\max}\oplus\left\langle \gamma_s \right\rangle =C_{1}$, so we see that $\eta_{m}$ does not stabilize.

($4 \implies 5$) Trivial.

($5\implies 3$) Let $\sigma$ be a total configuration with $\left|\sigma\right|\leq\#E-\#V$. We first write $\sigma=\tau-\delta$ for some $\tau,\delta\in\Z^n$ where $|\tau|=\#E-\#V$ and $\delta\geq 0$. Now if $\left\langle \overline{\beta_s}\right\rangle =K(G,s)$ for some vertex $s$, then $\tilde{\tau} \equiv \tilde{\sigma}_{\max}\mod\Delta_{s}\mathbb{Z}^{n-1}+\mathbb{Z}\beta_s$ so that $\tau\equiv\sigma_{\max}\mod\Delta\mathbb{Z}^{n}$ by Lemma \ref{cycleequiv}. It follows that $\sigma\equiv\sigma_{\max}-\delta\mod\Delta\mathbb{Z}^{n}$. Using that $\Delta \Z^n = \Delta \N^n$, we conclude from Lemma~\ref{l.lap} that $\sigma$ stabilizes. 


($1\iff 5$) This equivalence follows from Theorem \ref{thm:groupiso}.

($2\iff 5$)  This equivalence follows from Lemma \ref{lem:order}.
\end{proof}

\subsection{Graphs that are both Eulerian and coEulerian}

We conclude this section by characterizing the graphs that are both Eulerian and coEulerian.  
A strongly connected graph $G$ without loops is called a \emph{directed cactus} \cite{Zel89,Sch94} if each edge of $G$ is contained in a unique simple directed cycle. Let us call this the ``unique cycle property'' (UCP). As the proof of the next Proposition will show, the UCP is equivalent to the following ``unique path property'' (UPP): For any pair of vertices $x,y \in V$ there is a unique simple directed path in $G$ from $x$ to $y$. 
(A simple cycle or path is one with no repeated vertices; in particular, a $2$-cycle consisting of an edge and its reversal is simple.)

Directed cacti are in some sense analogous to trees. In particular, a bidirected graph has the UCP if and only if it is a bidirected tree.

\begin{prop}
\label{p.cactus}
Let $G$ be a strongly connected finite graph without loops. Then $G$ is both Eulerian and coEulerian if and only if $G$ is a directed cactus.
\end{prop}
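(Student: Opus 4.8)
The plan is to reduce the statement to a single symmetric condition on spanning-tree counts and then argue combinatorially. By Proposition~\ref{p.Eul}(2), $G$ is Eulerian exactly when $\kappa(s)=M_G$ for all $s$, and by definition $G$ is coEulerian exactly when $M_G=1$. Hence $G$ is both Eulerian and coEulerian if and only if $\kappa(v)=1$ for every vertex $v$: if both hold then $\kappa(v)=M_G=1$; conversely, if $\kappa\equiv 1$ then $M_G=\gcd\{\kappa(v)\mid v\in V\}=1$ and $\kappa(v)=M_G$, so both conditions hold. It therefore suffices to prove that $\kappa(v)=1$ for all $v$ if and only if $G$ satisfies the UCP, and I would pass through the UPP, which simultaneously establishes the promised equivalence UCP $\iff$ UPP.

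First I would show $\kappa(v)=1$ for all $v$ $\iff$ UPP. Assuming UPP, fix $v$ and send each $u\neq v$ to the first edge of the unique simple path from $u$ to $v$. Since the tail of a simple $u\to v$ path is itself the unique simple path from its own start to $v$, following these chosen edges retraces that path all the way to $v$; so the chosen edges form an oriented spanning tree toward $v$, and any in-tree must select exactly these edges, giving $\kappa(v)=1$. For the converse I would argue contrapositively: two distinct simple $x\to y$ paths can be trimmed, at the last vertex $a$ of their common prefix and the first vertex $b$ at which their continuations meet again, to two internally disjoint simple paths $P',Q'$ from $a$ to $b$. Fixing the out-edges along $P'$ and extending to a full in-tree toward $b$ — possible because strong connectivity always lets one attach a not-yet-settled vertex by an edge into the already-settled set — produces an in-tree in which $a$ fires first along $P'$; doing the same with $Q'$ yields a second, distinct in-tree toward $b$, so $\kappa(b)\geq 2$.

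Next I would treat UPP $\iff$ UCP, which by definition identifies the UPP with directed cacti. The direction UPP $\implies$ UCP is clean: a simple cycle through an edge $(a,b)$ is precisely that edge together with a simple path from $b$ to $a$, and UPP makes this path, hence the cycle, unique (this also rules out parallel edges). The reverse implication UCP $\implies$ UPP is where I expect the real difficulty. The intended route is again contrapositive: from two internally disjoint simple paths $P',Q'$ from $a$ to $b$ I would manufacture two distinct simple cycles sharing a common edge, contradicting UCP. A useful preliminary lemma, itself proved from UCP, is that two distinct simple cycles meet in at most one vertex: if they shared two vertices, then taking an arc of the first cycle between two consecutive common vertices and splicing it with a complementary arc of the second produces a third simple cycle reusing an edge of one of the originals, a contradiction.

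The main obstacle is organizing this splicing in the path setting, because a chosen return path from $b$ to $a$ may re-enter $P'\cup Q'$ before reaching $a$. I would handle this by cutting the return path at its first return $c$ to $P'\cup Q'$: when $c=a$ the two competing simple cycles $P'\cup R$ and $Q'\cup R$ appear immediately and share the return arc $R$, while when $c$ is an interior vertex of one path I would build the competing cycles from the arcs meeting at $c$ and $b$ and induct on the combined length of the two paths. Assembling the three steps gives directed cactus $=$ UCP $\iff$ UPP $\iff$ ($\kappa\equiv 1$) $\iff$ (Eulerian and coEulerian), which is the desired equivalence.
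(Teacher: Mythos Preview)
Your overall scaffolding matches the paper's: reduce ``Eulerian and coEulerian'' to $\kappa\equiv 1$, then pass through UPP and UCP. Your arguments for $\kappa\equiv 1 \Leftrightarrow$ UPP and for UPP $\Rightarrow$ UCP are correct and essentially what the paper does (the paper invokes Wilson's algorithm to extend a simple path to an oriented spanning tree; you do the same extension by hand).

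The gap is in UCP $\Rightarrow$ UPP. Your proposed induction on the ``combined length of the two paths'' is not well-founded. In your Case~2, where the return arc $R'$ from $b$ first re-enters $P'\cup Q'$ at an interior vertex $c$ of, say, $P'$, the natural new pair of internally disjoint $a\to c$ paths is $P'[a\to c]$ and $Q'\cup R'$. Their combined length differs from the old one by $|R'|-|P'[c\to b]|$, which can be strictly positive (take $|P'[c\to b]|=1$ and force the return path to wander before hitting $c$). No simple variant---minimum length, length of a designated side---decreases in general either, so the recursion as described need not terminate. Your preliminary lemma (under UCP, two distinct simple cycles meet in at most one vertex) is correct, but you have not shown how to leverage it to close this case.

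The paper avoids this difficulty by \emph{not} proving UCP $\Rightarrow$ UPP at all. Instead it proves UCP $\Rightarrow (\kappa\equiv 1)$ directly: under UCP, the bipartite incidence graph $\mathcal{T}$ on $V\cup\mathcal{C}$ (vertices versus simple directed cycles) is a tree, and from $\mathcal{T}$ one can explicitly read off, for each root $y$, the unique oriented spanning tree toward $y$ (for each cycle $C$, delete the one edge of $C$ whose tail is the vertex of $C$ nearest $y$ in $\mathcal{T}$). Your preliminary lemma is precisely the $4$-cycle case of ``$\mathcal{T}$ is acyclic''; extending it to longer cycles of $\mathcal{T}$ (splice one arc from each $C_i$ into a new simple cycle sharing an edge with $C_1$) is short, and this structural route is both cleaner and more informative than the path-splicing induction you outline.
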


\begin{proof}

Supposing that $G$ is both Eulerian and coEulerian, we have $\kappa(y)=1$ for all $y \in V$. By Wilson's algorithm \cite{Wil96}, any oriented spanning forest, one of whose components is oriented toward $y$, can be completed to a spanning tree oriented toward $y$. Given $x,y \in V$ and a simple path $P$ from $x$ to $y$, completing $P$ in this manner results in the unique spanning tree $T_y$ oriented toward $y$. Therefore all simple paths from $x$ to $y$ are contained in $T_y$, so $G$ has the UPP.


Next observe that for each directed edge $e=(y,x)$ there is a bijection between simple directed paths $P$ from $x$ to $y$ and simple directed cycles $P \cup \{e\}$ containing $e$. Hence the UPP 
implies the UCP.


Finally, supposing that $G$ has the UCP, we will compete the proof by showing that for each vertex $y$ there is a unique spanning tree of $G$ oriented toward $y$, so that $G$ is both Eulerian and coEulerian.
Let $\mathcal{C}$ be the set of simple directed cycles in $G$, and consider the undirected bipartite graph $\mathcal{T}$ on vertex set $V \cup \mathcal{C}$ whose edges are the pairs $\{v,C\}$ such that vertex $v$ lies on cycle $C$. The UCP implies that $\mathcal{T}$ is a tree. Now we can manifestly describe the unique spanning tree of $G$ oriented toward $y$. Namely, for each cycle $C$ let $e_C$ be the outgoing edge from $x$ in $C$, where $(C,x,\ldots,y)$ is the unique path from $C$ to $y$ in $\mathcal{T}$. 
The remaining edges $E - \{e_C|C \in \mathcal{C}\}$ form a spanning tree of $G$ oriented toward $y$.  Moreover any such spanning tree $T$ must contain all edges of $G$ of the form $(x,x')$ where $(C,x,C',\ldots,y)$ is the path from any cycle $C$ to $y$ in $\mathcal{T}$ and $x' \in C'$, else there would be no path from $x$ to $y$ in $T$. By the UCP, edges $e_C$ and $(x,x')$ are distinct since they belong to distinct cycles. Therefore $T$ must omit all of the edges $e_C$, and hence $T$ is unique.
\end{proof}

\section{Computational complexity}
\label{s.NPcomplete}

Bj\"{o}rner and Lov\'{a}sz \cite[Corollary 4.9]{BL92} showed that the \hyperlink{d.halting}{\textsc{halting problem for chip-firing}} can be decided in polynomial time for simple Eulerian graphs. By Theorem \ref{t.main}, it can be decided in linear time for coEulerian multigraphs. The purpose of this section is to show that despite these two easy cases, the problem is $\NP$-complete for general directed multigraphs. 

To see that it is in $\NP$, let $\sigma$ be a nonnegative halting chip configuration on a strongly connected directed multigraph $G=(V,E)$, and let $\xx(v)$ be the number of times vertex $v$ fires.  By Lemma~\ref{l.lap} the vector $\xx$ is a certificate that $\sigma$ halts. Why does this certificate have polynomial size? By Lemma~\ref{l.bl} we have $\xx(v) < \pi(v)$ for some vertex $v$. Moreover for any directed edge $(u_1, u_2)$ the vertex $u_2$ receives at least $\xx(u_1)$ chips from $u_1$ and so $u_2$ fires at least $\xx(u_1) / d_{u_2}$ times. For any vertex $u$, by inducting along a path from $u$ to $v$ we find that $\xx(u) \leq D \xx(v)$ where $D = \prod_{w \in V} d_w$ is the product of all outdegrees. By Lemmas~\ref{lem:period} and \ref{lem:kerper} relating the primitive period vector $\pi$ to the spanning tree count vector $\kappa$, we have $\pi(v) \leq \kappa(v) \leq D$, so all entries of $\xx$ are at most $D^2$. Noting that $\log D \leq \sum_{u,v \in V} \log d_{uv}$, which is the size of description of the adjacency matrix, we conclude that \textsc{the halting problem for chip-firing} is in $\NP$.

To show that it is also $\NP$-hard, our starting point is the following decision problem considered by Amini and Manjunath \cite{AM}. \hypertarget{d.rank}~

\decision{Nonnegative rank}{a basis of an $(n-1)$-dimensional lattice $L \subset \Z^n_0$ and a vector $\sigma\in\Z^n$,}{whether there is a vector $\tau\in\N^{n}$ such
that $\sigma-\tau\in L$.}
If there exists such a $\tau$, then $\sigma$ is said to have \emph{nonnegative rank} relative to $L$. 
In \cite[Theorem 7.2]{AM} \textsc{nonnegative rank} is shown to be $\NP$-hard by reducing from the problem of deciding whether a given simplex with rational vertices contains an integer point.
(To give a little context, the term ``rank'' is inspired by the Riemann-Roch theorem of Baker and Norine \cite{BN}.  Asadi and Backman \cite{AB} extend parts of the Baker-Norine theory to directed graphs. Kiss and T\'{o}thm\'{e}r\'{e}sz \cite{KT} show that computing the Baker-Norine rank---a harder problem than deciding whether it is nonnegative---is already $\NP$-hard when $L$ is the Laplacian lattice of a simple undirected graph.)

The link between chip-firing and \textsc{nonnegative rank} is provided by the following variant of a theorem of Perkinson, Perlman and Wilmes \cite{primer}.

\begin{thm}
\label{t.latticelaplacian}
	Given an $(n-1)$-dimensional lattice $L\subset \Z_{0}^{n}$, there exists a strongly connected multigraph with Laplacian $\Delta$ such that 
		\[ L=\Delta\Z^n. \] 
Moreover, $\Delta$ can be computed from a basis of $L$ in polynomial time, and all entries of $\Delta$ are bounded in absolute value by $nd$ where $d = \det L$.
\end{thm}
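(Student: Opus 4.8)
The plan is to translate the realization problem into a purely combinatorial statement about the columns of $\Delta$. A matrix $\Delta\in\Z^{n\times n}$ is the Laplacian of a strongly connected multigraph on $n$ vertices exactly when (i) its off-diagonal entries are nonpositive, (ii) each column sums to $0$, and (iii) the digraph placing $-\Delta_{vw}$ parallel edges from $w$ to $v$ (for $v\ne w$) is strongly connected; the multiplicities $d_{wv}=-\Delta_{vw}$ and the diagonal $\Delta_{ww}=-\sum_{v\ne w}\Delta_{vw}$ then recover $G$. Writing $c_w=\Delta\delta_w$ for the $w$-th column, condition (i) says $c_w\in K_w:=\{v\in\Z_0^n:\ v(j)\le 0\ \text{for all } j\ne w\}$, condition (ii) is automatic once we demand $\Delta\Z^n=L\subseteq\Z_0^n$, and $\Delta\Z^n=L$ says the $c_w$ generate $L$. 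So it suffices to produce $c_1,\dots,c_n$ that generate $L$ with $c_w\in K_w$ and strongly connected support, and then to do so with all entries at most $nd$ and in polynomial time.

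For existence I would use two facts about the geometry of $L$. First, deleting a coordinate identifies $\Z_0^n$ with $\Z^{n-1}$ unimodularly and $L$ with a full-rank $\tilde L\subseteq\Z^{n-1}$ of index $d=\det L$; in particular $d$ annihilates $\Z_0^n/L$, so $d\,\Z_0^n\subseteq L$ and every $d(e_i-e_j)$ lies in $L$. Second, each open cone $K_w^\circ=\{v\in\R_0^n:\ v(j)<0\ \forall j\ne w\}$ is nonempty and full-dimensional in the hyperplane $\R_0^n$, so $L\cap K_w^\circ\ne\varnothing$; moreover for any $v\in L$ and fixed $u\in L\cap K_w^\circ$ one has $v+Nu\in K_w^\circ$ for all large $N$, whence $v=(v+Nu)-Nu$ exhibits $v$ as a difference of two elements of $L\cap K_w^\circ$. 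Thus $L\cap K_w^\circ$ generates $L$ as a group for each $w$, and choosing $c_w\in K_w^\circ$ makes all off-diagonal entries strictly negative, so the support digraph is complete and hence strongly connected. The existence half therefore reduces to selecting generators $c_1,\dots,c_n$, one from each open cone, that together generate $L$ and not merely a finite-index sublattice; this is the content I would import from (or reprove in the style of) Perkinson, Perlman and Wilmes.

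For the quantitative statement I would start from the given basis of $L$ and compute the Hermite normal form of $\tilde L$, a polynomial-time step that yields a triangular generating set whose entries in the first $n-1$ coordinates are sign-reducible into $(-d,0]$ and hence bounded by $d$ (each diagonal pivot divides $d$, and below-diagonal residues are smaller than a pivot). The remaining $n$-th coordinate of each generator is the balancing sum $-\sum_{j<n}c_w(j)$ of at most $n-1$ such entries, so it is bounded by $(n-1)d<nd$, which is where the factor $n$ enters. The nonpositive off-diagonal signs among the first $n-1$ coordinates and the diagonal positivity come for free from the sign-adjusted triangular shape, and the containment $d\,\Z_0^n\subseteq L$ supplies the cheap cone vectors $d(e_i-e_j)$ with which to repair any coordinate whose sign is still wrong.

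The main obstacle is doing all of this \emph{simultaneously}: keeping the generators a genuine generating set of $L$ while forcing every off-diagonal entry, including those in the sink-like $n$-th coordinate, to be nonpositive, and keeping every entry at most $nd$. The tension is concrete: the obvious sign repair, adding a multiple of $d(e_i-e_j)\in L$ to a column, enlarges the triangular pivots and so shrinks the span of the columns to a proper sublattice of $L$, whereas a pure change of basis that fixes the signs can blow the entries up. I expect the crux of the proof to be a careful interleaving of sign-adjusted Hermite reduction with unimodular column operations that preserve $L$, arranged so that each generator's unavoidable positive mass is absorbed into its own diagonal coordinate, bounded by $nd$, rather than leaking into another vertex's coordinate; verifying strong connectivity of the resulting support and the entry bound would then be the concluding bookkeeping.
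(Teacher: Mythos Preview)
Your outline assembles the right ingredients---Hermite normal form of the projected lattice, the inclusion $d\,\Z_0^n\subseteq L$, and the translation of ``Laplacian'' into sign conditions on columns---but it stops exactly where the construction must be given. You explicitly leave the ``simultaneous'' sign/span/bound problem as something you ``expect'' can be handled by an unspecified interleaving of reductions; that is the gap. And the obstacle you name is genuine: adding a lattice element to a basis vector can collapse the span (take $e_1\mapsto 2e_1$ in $\Z$), so a vague ``repair with $d(e_i-e_j)$'' is not yet an argument.

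The missing idea, which dissolves your obstacle in one stroke, is this: if $B$ is lower triangular with columns in $H\Z^{n-1}$ and with the \emph{same diagonal} as the Hermite form $H$, then automatically $B\Z^{n-1}=H\Z^{n-1}$ (since $\det B=\det H$ forces equal index). Hence one may subtract any multiple of $d$ from any \emph{strictly below-diagonal} entry of $H$ without touching the span. The paper exploits this by subtracting $k_jd$ only from the single subdiagonal entry $h_{j+1,j}$, with $k_j$ chosen so that the $j$th column sum lands in $(-d,0]$. It then places $-B$ in rows $1,\ldots,n-1$ and columns $2,\ldots,n$ of $\Delta$, takes $d(e_1-e_n)\in L$ as the first column, and fills the last row so each column sums to zero. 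Under this shift of indices, the diagonal of $H$ becomes the \emph{superdiagonal} of $\Delta$ (strictly negative), the modified subdiagonal of $B$ becomes the \emph{diagonal} of $\Delta$ (nonnegative by choice of $k_j$), all remaining off-diagonal entries are $0$ or $-h_{ij}\le 0$, and the last row lies in $(-d,0]$. Strong connectivity comes for free from the Hamiltonian cycle $1\to n\to n-1\to\cdots\to 1$ supplied by the superdiagonal and the $(n,1)$ entry, so there is no need to force every $c_w$ into the open cone $K_w^\circ$ as you propose---that more ambitious target is exactly what made your version look hard. The bound $nd$ then falls out because every entry of $H$ is at most $d$ and each $k_j\le n-1$.
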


The inspiration for Theorem~\ref{t.latticelaplacian} is \cite[Theorem 4.11]{primer}, which expresses an arbitrary $(n-1)$-dimensional lattice in $\Z^{n-1}$ as a \emph{reduced} Laplacian lattice $\Delta_s \Z^{n-1}$.  Modifying its proof to express $L \subset \Z_0^n$ as a \emph{total} Laplacian lattice is straightforward; we give the details below. 

In our application it will be essential to compute the Laplacian matrix $\Delta$ from a basis of $L$ in polynomial time (in the length of description of the basis).   It is not evident whether \cite[Algorithm 4.13]{primer} runs in polynomial time, due to possible blow up of the matrix entries in repeated applications of the Euclidean algorithm \cite{F,HM}.  
As detailed below, this numerical blow up can be avoided by the usual trick of computing modulo the determinant $d$.
 
To see how we will apply Theorem~\ref{t.latticelaplacian}, note that strong connectivity implies 	
	\[ \Delta \Z^n = \Delta \N^n \]
since the period vector of Lemma~\ref{lem:period} is strictly positive. Thus, a vector $\sigma \in \Z^n$ has nonnegative rank relative to $L = \Delta \Z^n$ if and only if there exists $\xx \in \N^n$ such that
	\begin{equation*} \label{e.outofdebt} \sigma + \Delta \xx \geq \zero.  \end{equation*}  
Now by Lemma~\ref{l.lap}, such an $\xx$ exists if and only if the chip configuration $\sigma_{\max} - \sigma$ stabilizes. To summarize, a polynomial time computation of $\Delta$ given a basis for $L$ yields a polynomial time Karp reduction from \textsc{nonnegative rank} to the \textsc{halting problem for chip-firing} on a finite directed multigraph, showing that the latter is $\NP$-hard.

\begin{cor}
\label{c.NPhard}
\textsc{the halting problem for chip-firing} 
is $\NP$-complete.
\end{cor}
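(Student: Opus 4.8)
The plan is to establish the two halves of $\NP$-completeness separately, leaning on the machinery assembled in the surrounding discussion.

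For membership in $\NP$ I would take as certificate of a YES instance the firing vector $\xx \in \N^V$ recording how many times each vertex fires in a stabilizing sequence; by the Least Action Principle (Lemma~\ref{l.lap}) such an $\xx$ exists precisely when $\sigma$ stabilizes, and verifying it amounts to checking that $\sigma - \Delta\xx$ is stable, which is polynomial. The crux is to argue the certificate is \emph{short}. I would invoke Lemma~\ref{l.bl} to fix a vertex $v$ with $\xx(v) < \pi(v)$, then propagate this bound: along any directed edge $(u_1,u_2)$ the head $u_2$ absorbs at least $\xx(u_1)$ chips and so fires at least $\xx(u_1)/d_{u_2}$ times, so inducting along a directed path from an arbitrary $u$ to $v$ yields $\xx(u) \le D\,\xx(v)$ with $D = \prod_{w\in V} d_w$. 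Combining with $\pi(v) \le \kappa(v) \le D$ (Lemmas~\ref{lem:period} and~\ref{lem:kerper}) bounds every entry of $\xx$ by $D^2$, and since $\log D$ is at most the bit-length of the adjacency matrix, the certificate has polynomial size.

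For $\NP$-hardness I would give a Karp reduction from \textsc{nonnegative rank}, which is $\NP$-hard by Amini--Manjunath. Given a basis of $L \subset \Z^n_0$ and a vector $\sigma$, I first apply Theorem~\ref{t.latticelaplacian} to produce, in polynomial time and with entries bounded by $nd$, a strongly connected multigraph whose total Laplacian satisfies $\Delta\Z^n = L$. Strong connectivity gives $\Delta\N^n = \Delta\Z^n$ (Lemma~\ref{lem:period}), so $\sigma$ has nonnegative rank relative to $L$ iff there is $\xx\in\N^n$ with $\sigma + \Delta\xx \ge \zero$, which by Lemma~\ref{l.lap} holds iff the configuration $\sigma_{\max} - \sigma$ stabilizes. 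The reduction therefore outputs the halting instance $(\Delta,\ \sigma_{\max}-\sigma)$, and correctness is immediate from this chain of equivalences.

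The step I expect to be the main obstacle is supplying the polynomial-time, entry-bounded realization of $L$ as a total Laplacian lattice (Theorem~\ref{t.latticelaplacian}): a naive application of the Euclidean-algorithm construction of Perkinson--Perlman--Wilmes can blow up the matrix entries, and this must be controlled by computing modulo the determinant $d$. Since that theorem is available here, the remaining care in the corollary is bookkeeping: confirming that the certificate bound above is genuinely polynomial in the input length, and---because the stated halting problem demands a nonnegative configuration---verifying that the instance $\sigma_{\max}-\sigma$ can be taken nonnegative, e.g.\ by reducing $\sigma$ to a coset representative at most $\sigma_{\max}$ (legitimate since nonnegative rank is a property of $\sigma \bmod L$).
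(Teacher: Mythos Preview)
Your argument is essentially the paper's, step for step: the same firing-vector certificate bounded by $D^2$ via Lemma~\ref{l.bl} and the path-propagation inequality for $\NP$ membership, and the same Karp reduction from \textsc{nonnegative rank} through Theorem~\ref{t.latticelaplacian}, using $\Delta\N^n=\Delta\Z^n$ and Lemma~\ref{l.lap} to identify nonnegative rank of $\sigma$ with stabilization of $\sigma_{\max}-\sigma$.

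You are in fact more careful than the paper on one point: the stated decision problem requires the input configuration to be nonnegative, whereas $\sigma_{\max}-\sigma$ need not be, and the paper does not address this. Your proposed patch, however, needs more work. Passing to an $L$-equivalent representative $\sigma'\le\sigma_{\max}$ is possible precisely when the coset of $\sigma_{\max}-\sigma$ contains a nonnegative vector, and that is itself a \textsc{nonnegative rank} question (for $\sigma_{\max}-\sigma$ rather than $\sigma$); so as written the remedy is circular, and it is not true that every coset admits such a representative. This loose end is shared with the paper and does not undermine the main line of the reduction, but if you want to close it you will need a different device than a bare coset shift.
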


It remains to prove Theorem~\ref{t.latticelaplacian}.  Recall that an $m \times m$ integer matrix $U$ is called \emph{unimodular} if $\det U  = \pm 1$. Any nonsingular square integer matrix $A$ has a \emph{Hermite normal form} 
	\[ H= AU \] 
where $U$ is a unimodular integer matrix, and $H = (h_{ij})$ is lower-triangular with integer entries satisfying
\begin{align*}
&0 < h_{ii}, &&\qquad 1\leq i\leq m \\
&0\leq  h_{ij}<h_{ii}, &&\qquad 1\leq j<i\leq m.
\end{align*}
The existence and uniqueness of $H$ was proved by Hermite \cite{H}. The Hermite normal form is useful to us because $H$ can be computed from $A$ in polynomial time \cite{DKT} and $H\Z^m = A (U \Z^m) = A \Z^m$ by the unimodularity of $U$.  Let \[ d = |\det A | = \det H = \prod_{i=1}^m h_{ii}. \] 
We will use the following observations about the column span $A \Z^m$.
 
\begin{lem}\label{lem:lowtri} \cite[Cor.\ 2.3]{DKT}
	Let $B$ be a lower triangular $m \times m$ matrix whose columns are in
	$A\Z^{m}$ and whose diagonal entries satisfy $b_{ii}=h_{ii}$ for all $i$. Then $B\Z^{m} = A\Z^{m}$. 
\end{lem}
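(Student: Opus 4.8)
The plan is to prove the two lattices coincide by a covolume (index) comparison: I would first show one is contained in the other, then show they have the same determinant, and conclude they must be equal because a full-rank sublattice of the same covolume as a lattice containing it is the whole lattice. The key inputs are already laid out in the excerpt, namely that $H\Z^m = A\Z^m$ and that $d = |\det A| = \det H = \prod_{i=1}^m h_{ii}$.

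First I would establish the inclusion $B\Z^m \subseteq A\Z^m$. This is immediate from the hypothesis: each column of $B$ lies in $A\Z^m$, and $A\Z^m$ is closed under integer linear combinations, so every element of $B\Z^m$ lies in $A\Z^m$. Next I would compute the determinant of $B$. Since $B$ is lower triangular, $\det B = \prod_{i=1}^m b_{ii}$, and the hypothesis $b_{ii} = h_{ii}$ together with $d = \prod_{i=1}^m h_{ii}$ gives $|\det B| = d = |\det A|$. In particular each $b_{ii} = h_{ii} > 0$, so $B$ is nonsingular and $B\Z^m$ is a full-rank sublattice of $\Z^m$.

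Finally I would invoke multiplicativity of the lattice index. For full-rank sublattices $B\Z^m \subseteq A\Z^m \subseteq \Z^m$ one has $[\Z^m : B\Z^m] = [\Z^m : A\Z^m]\,[A\Z^m : B\Z^m]$, and for any full-rank sublattice $L$ the index equals $[\Z^m : L] = |\det L|$. Since $|\det B| = |\det A|$, the first two indices agree, forcing $[A\Z^m : B\Z^m] = 1$ and hence $B\Z^m = A\Z^m$, as desired. The only point requiring care — and the step I would flag as the main (minor) obstacle — is justifying that $B$ is nonsingular so that $B\Z^m$ genuinely has full rank and its index equals $|\det B|$; this is exactly where the diagonal hypothesis $b_{ii} = h_{ii} > 0$ is used, and without it the index argument would not apply. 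Everything else is the standard fact that a same-covolume sublattice of a lattice that contains it must coincide with it.
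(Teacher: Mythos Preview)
Your argument is correct: the inclusion $B\Z^m \subseteq A\Z^m$ is immediate from the hypothesis, and the covolume comparison $|\det B| = \prod_i b_{ii} = \prod_i h_{ii} = |\det A|$ forces equality of the two full-rank lattices. The paper does not actually supply its own proof of this lemma; it simply cites it as \cite[Cor.\ 2.3]{DKT}, so there is nothing to compare against beyond noting that your covolume argument is the standard one.
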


\begin{lem}\label{lem:dei} \cite[Prop.\ 2.5]{DKT} 
	$d \Z^{m} \subset A \Z^{m}$.
\end{lem}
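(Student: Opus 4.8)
The plan is to invoke Cramer's rule via the adjugate matrix. Since $A$ is a nonsingular integer matrix, its adjugate (classical adjoint) $\mathrm{adj}(A)$, whose entries are the signed $(m-1)\times(m-1)$ minors of $A$, has \emph{integer} entries, and it satisfies the identity $A\,\mathrm{adj}(A)=(\det A)\,I$.

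First I would observe that, because $d=|\det A|$, the matrix $d\,A^{-1}=\pm\,\mathrm{adj}(A)$ is an integer matrix. Then for any $v\in\Z^m$ the vector $w:=d\,A^{-1}v=\pm\,\mathrm{adj}(A)\,v$ lies in $\Z^m$, and $Aw=d\,A^{-1}v\cdot$ applied gives $Aw=dv$. Hence $dv=Aw\in A\Z^m$. Since $v\in\Z^m$ was arbitrary, this yields $d\Z^m\subset A\Z^m$, as claimed. It suffices to check this for each standard basis vector $e_i$, for which $d\,e_i=A\big(\pm\,\mathrm{adj}(A)\,e_i\big)$ with $\mathrm{adj}(A)\,e_i\in\Z^m$.

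I expect no real obstacle here: the only points requiring care are the integrality of $\mathrm{adj}(A)$ and the sign bookkeeping that links $d=|\det A|$ to $\det A$, both of which are immediate. As an alternative that avoids the adjugate, one can argue directly from the Hermite normal form, using $H\Z^m=A\Z^m$ (established above from the unimodularity of $U$): since $H$ is lower triangular with $\det H=\prod_i h_{ii}=d$, the inverse $H^{-1}$ is lower triangular with diagonal entries $1/h_{ii}$, so $d\,H^{-1}$ has integer entries (the diagonal entries are $d/h_{ii}=\prod_{j\neq i}h_{jj}\in\Z$, and each off-diagonal division by a product of diagonal entries is absorbed by the factor $d$). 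Consequently $d\,e_i=H\big(d\,H^{-1}e_i\big)\in H\Z^m=A\Z^m$ for every $i$, giving the same conclusion.
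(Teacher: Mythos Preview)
Your adjugate argument is correct and entirely standard: $A\,\mathrm{adj}(A)=(\det A)I$ with $\mathrm{adj}(A)\in\Z^{m\times m}$ immediately gives $d\Z^m=(\pm\det A)\Z^m\subset A\Z^m$. The Hermite-normal-form alternative is also fine, though note it is really the same argument in disguise, since $dH^{-1}=\pm\,\mathrm{adj}(H)$.

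As for comparison with the paper: there is nothing to compare. The paper does not prove this lemma; it simply cites \cite[Prop.~2.5]{DKT} and moves on. Your proof is the standard one-line justification that the cited reference would give.
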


We will apply these lemmas with $m=n-1$. Note that an $n\times n$ integer matrix is the total Laplacian of a directed multigraph if and only if (i) the entries of each column sum to zero, (ii) the diagonal entries are nonnegative, and (iii) the off-diagonal entries are nonpositive.

Given an $n\times (n-1)$ integer matrix whose columns are a $\Z$-basis of the $(n-1)$-dimensional lattice $L \subset \Z_0^n$, let $A$ be the result of removing the last row of $M$.
Since each column of $M$ sums to zero, $A$ is nonsingular. Let $H = AU$ be the Hermite normal form of the $(n-1) \times (n-1)$ nonsingular matrix $A$.

The hypotheses of Lemma \ref{lem:lowtri} are trivially satisfied when $B=H$; and if $B$ satisfies the hypotheses of Lemma \ref{lem:lowtri} then by Lemma \ref{lem:dei} it will continue to do so if we subtract $d$ from an entry below the diagonal. Using this operation we can make the entries immediately below the diagonal sufficiently negative so that the sum of the entries in each column is nonpositive. Namely, let $B = (b_{ij})$ where
	\[ b_{ij} = \begin{cases} h_{ij} - k_j d, & i=j+1 \\ h_{ij}, & \text{else} \end{cases} \]
and $k_j$ for each $j=1,\ldots,n-2$ is a nonnegative integer such that 
\begin{equation}\label{eqn:bound1}
 (k_j - 1)d < \sum_{i=1}^m h_{ij} \leq k_j d .
\end{equation}
Now let 
	\[ \Delta = \left[ \begin{array}{ccccccc} +d & -h_{11} & 0 & 0 & 0 & \cdots & 0 \\
					     0 & + & -h_{22} & 0 & 0 & \cdots & 0 \\
					     0 & - & + & -h_{33} & 0 & \cdots & 0 \\
					     0 & - & - & + & -h_{44} & \cdots & 0 \\
					     \vdots& \vdots & \vdots & \vdots & \ddots & \ddots & \vdots \\
					     0 & - & - & - & - & \ddots & -h_{mm} \\
					     -d & - & - & - & - & \cdots & +h_{mm} \end{array} \right] \]
be the $n \times n$ matrix with upper right corner $-B$, the column vector $d\mathbf{e}_{1}-d\mathbf{e}_{n}$ appended on the left, and a row appended on the bottom such that the entries of each column sum to zero. By the choice of $k_j$ in (\ref{eqn:bound1}), the bottom row of $\Delta$ is nonpositive, except for its rightmost entry $h_{mm}$. Therefore $\Delta$ satisfies the conditions (i)-(iii) above. Since the entries immediately above the diagonal of $\Delta$ are negative, as is $\Delta_{n1}$, the matrix $\Delta$ is the Laplacian of a strongly connected multigraph (it has the Hamiltonian cycle $1 \rightarrow n \rightarrow n-1 \rightarrow \cdots \rightarrow 1$). By Lemma~\ref{lem:dei} the first column of $\Delta$ belongs to $L$. Moreover, since both $L$ and $\Delta \Z^n$ are contained in $\Z_0^n$ and $B \Z^{n-1} = A\Z^{n-1}$, the integer span of the remaining columns of $\Delta$ is $L$. Thus $L = \Delta \Z^n$. 
Since each entry of $H$ is at most $d = \prod h_{ii}$, each entry of $\Delta$ has magnitude at most $nd$, 
completing the proof of Theorem~\ref{t.latticelaplacian}.

\subsection{Simple directed graphs}

Let us point out a sense in which the $\NP$-hardness of Corollary~\ref{c.NPhard} is rather weak:  
the directed graphs for which the \hyperlink{d.halting}{\textsc{halting problem for chip-firing}} is hard may have large edge multiplicities.  This is because the Laplacian $\Delta$ of Theorem~\ref{t.latticelaplacian} may have large entries, which in turn is because the lattice $L$ in a hard instance of \hyperlink{d.rank}{\textsc{nonnegative rank}} has large determinant. 
An interesting question is whether the \textsc{halting problem for chip-firing} remains $\NP$-hard when restricted to \emph{simple} directed graphs, those with edge multiplicities in $\{0,1\}$. Does the hardness arise from directedness or from large edge multiplicities (or both)? The following table summarizes what is known.
	\begin{table}[here]
	\begin{tabular}{c|l|l|}
	~ & \emph{simple graphs} & \emph{multigraphs} \\ \hline
	\emph{coEulerian} & $\P$ (Theorem~\ref{t.main}) & $\P$ (Theorem~\ref{t.main}) \\ \hline
	\emph{bidirected} & $\P$ (Tardos \cite{Tardos}) &  $?$ \\ \hline
	\emph{Eulerian} & $\P$ (Bj\"{o}rner-Lov\'{a}sz \cite{BL92}) & $?$ \\  \hline
	\emph{strongly connected} & $?$ & $\NP$-complete (Cor.~\ref{c.NPhard}) \\
	\hline
	\end{tabular}
	\medskip
	\caption{Complexity of the \protect\hyperlink{d.halting}{\textsc{halting problem for chip-firing}} for eight different classes of strongly connected directed graphs. \label{table.complexity}}
	\end{table}

\section*{Acknowledgements}

The authors thank Spencer Backman, Swee Hong Chan, Daniel Jerison, Robert Kleinberg, Claire Mathieu and John Wilmes for inspiring discussions.  We thank B\'{a}lint Hujter, Viktor Kiss and Lilla T\'{o}thm\'{e}r\'{e}sz for pointing out to us the two open cases in the right column of Table~\ref{table.complexity}. Finally, we thank the referee for comments that meaningfully improved the paper, and in particular for leading us to Propostion~\ref{p.cactus}.

\end{document}